\newcommand{\Aut}{\operatorname{Aut}}
\newcommand{\Z}{\mathbb{Z}}
\newcommand{\C}{\mathbb{C}}
\newcommand{\N}{\mathbb{N}}
\newcommand{\FF}{\mathbb{F}}
\newcommand{\F}{\mathcal{F}}
\newcommand{\D}{\mathcal{D}}
\newcommand{\qs}{q}
\newtheorem{definition}{Definition}[subsection]
\newtheorem{theorem}[definition]{Theorem}
\newtheorem*{theorem*}{Theorem}
\newtheorem{proposition}[definition]{Proposition}
\newtheorem{lemma}[definition]{Lemma}
\newtheorem{remark}[definition]{Remark}
\newtheorem{notation}[definition]{Notation}
\newtheorem{corollary}[definition]{Corollary}
\newcounter{exo} \newcounter{numexercice}
\renewcommand{\theexo}{\arabic{exo}}
\newcounter{IntroCounter}
\begin{document}
\title[ADO invariants directly from partial traces of homological representations]{ADO invarints directly from partial traces of homological representations} 
  \author{Cristina Ana-Maria Anghel}
\address{Mathematical Institute, University of Oxford, Oxford, United Kingdom} \email{palmeranghel@maths.ox.ac.uk} 
  \thanks{ }
  \date{\today}

\begin{abstract}
The ADO invariants are a sequence of non-semisimple quantum invariants coming from the representation theory of the quantum group $U_q(sl(2))$ at roots of unity. Ito showed that these invariants are sums of traces of quotients of homological representations of braid groups (truncated Lawrence representations). In this paper we show a direct homological formula for the ADO invariants, as sums of partial traces of Lawrence type representations, without further truncations. 
\end{abstract}

\maketitle
\setcounter{tocdepth}{1}
 \tableofcontents
 {
\section{Introduction} 

This paper concerns the family of non-semisimple quantum invariants called Coloured Alexander polynomials or ADO invariants \cite{ADO}. They come from the representation theory of the quantum group $U_q(sl(2))$ at roots of unity. The theory of quantum invariants started with the discovery of the Jones polynomial and was developed further by Reshetikhin and Turaev. They introduced an algebraic construction which starts with the representation theory of a quantum group and leads to link invariants. This construction applied for the generic quantum group $U_q(sl(2))$ leads to the sequence of coloured Jones polynomials. Dually, the representation theory of the same quantum group at roots of unity leads to the sequence of coloured Alexander invariants $\{ \Phi_N(L,\lambda)\}_{N \in \mathbb N}$. This family recovers the original Alexander polynomial at the first term (corresponding to $N=2$).

The initial definition of these invariants was algebraic. On the homological side, Lawrence introduced in \cite{Law}, \cite{Law1} a sequence of braid group representations using the homology of coverings of configuration spaces in the punctured disc. This construction opened a new direction towards connections between quantum invariants and topological information. Using these homological representations, Lawrence (\cite{Law1}) and later Bigelow based on her work (\cite{Big}), showed that the Jones polynomial can be seen as an intersection pairing between homology classes in certain coverings of configuration spaces. We refer to such a description as a {\em topological model}. 

A different approach aims to describe an invariant as sums of {\em traces of homological representations}.
In 2015 (\cite{Ito3}), Ito showed that the loop expansion of coloured Jones invariants can be obtained as an infinite sum of traces of Lawrence representations. A second step towards this kind of homological information was made in \cite{Ito2}, where Ito introduced certain quotients of the Lawrence representations. Then, he proved that the ADO invariants are sums of {\em traces of truncated Lawrence representations}. He said (\cite{Ito2}) that it would be interesting to investigate further links towards more topological definitions, mentioning that an obstruction to doing this in his model comes from a ``lack of understanding'' of the truncation procedure of the Lawrence representations for $N>2$. 

Further on, in 2017 (\cite{Cr2}) we constructed a {\em topological model} for the coloured Jones polynomials, as intersections of homology classes in coverings of configuration spaces. Then, in 2019 (\cite{Cr1}), we constructed a {\em topological model} for the coloured Alexander polynomials using truncated Lawrence representations. These were existence type results. In \cite{Martel}, Martel introduced a slight variation of Lawrence representations. Based on that, in his thesis he gave a homological model for the coloured Jones polynomials, as sums of {\em traces of these homological representations}. 

Going back to {\em topological models}, very recently we showed a globalised result (\cite{Cr3}), proving that the coloured Jones and ADO invariants both come as different specialisations of a {\em unified topological model} over two variables. More precisely, we described them as specialisations of certain intersection pairings between explicit homology classes in these Lawrence type representations. 

This paper concerns a {\em homological type model} for the ADO invariants, and so it is different and independent from the {\em topological intersection type models} which appeared in \cite {Cr1} and \cite{Cr3}. This model will be given by a sum of partial traces of certain homological representations. More specifically, we introduce the notion of {\em homological partial trace}, which corresponds to the usual partial trace, but restricted to weight spaces rather than tensor powers of quantum representations. Then, we  provide a {\em homological model} for the ADO invariants, as sums of homological partial traces of specialisations of certain subrepresentations in Lawrence representations. 


\subsection{Strategy and description of the homology groups}

For $n,m \in \N$, we use the unordered configuration space of $m$ points in the $n$-punctured disc and denote it by $C_{n,m}$. Then $\tilde{C}_{n,m}$ is a certain $\Z \oplus \Z$-covering space of this configuration space. Further on, $H^{\text{lf},-}_m(\tilde{C}_{n,m}, \Z)$ will the Borel-Moore homology of the covering, relative to a certain part of the boundary. This homology group is generated by homology classes given by lifts of geometric submanifolds $\mathbb F_e$ in the base space $C_{n,m}$, prescribed by partitions $e$ of $m$ into $n$ positive integers. The precise construction is presented in section \ref{S:3}. The version of Lawrence representation from \cite{Martel} is obtained from the braid group action on this homology:
$$l_{n,m}: B_n\rightarrow Aut\left(H^{\text{lf},-}_m(\tilde{C}_{n,m}, \Z), \Z[x^{\pm1}, d^{\pm 1}]\right).$$
Now, we fix $N\in \mathbb N$ to be the colour of the ADO invariant that we want to study and $\xi_N$ the standard primitive $2N^{th}$ root of unity.  

Further on, we define a subspace in this homology, generated by those classes given by lifts of the geometric submanifolds $\mathbb F_e$ where $e$ is an $n-$partition of $m$, whose components are all strictly smaller than $N$ (see definition \ref{D:3}):
$${}^N \! H_{n,m} \subseteq_{\Z[x^{\pm1}, d^{\pm 1}]} H^{\text{lf},-}_m(\tilde{C}_{n,m}, \Z).$$
 
 \begin{notation}(Specialisation)\label{D:1} For two complex numbers $q,\lambda \in \C$, we use the specialisation of coefficients towards one variable, given by the formula:
$$\psi_{q,\lambda}: \Z[x^{\pm},d^{\pm}]\rightarrow \C$$ 
$$\psi_{q,\lambda}(x)= \qs^{2 \lambda}; \ \ \psi_{q,\lambda}(d)=\qs^{-2}.$$
\end{notation}
Using the precise form of the $R$-matrix from the quantum side, we show that  $l_{n,m}$ induces a well defined braid group action on the subspace ${}^N \! H_{n,m}$, when we specialise through $\psi_{\xi_N,\lambda}$ (definition \ref{D:2}). We call this action {\em level $N$ Lawrence representation} and denote it by:
$$  {}^N \! l_{n,m}:B_n\rightarrow Aut\left( {}^N \! H_{n,m}|_{\psi_{\xi_N,\lambda}}, \C \right).$$

Further on, we define the notion of homological partial trace of weight zero, and denote it as below (definition \ref{D:4}):
$$ hptr_0: Aut\left( {}^N \!  H_{n,m}|_{\psi_{\xi_N,\lambda}}, \C \right) \rightarrow \C$$

Our main result presents the $N^{th}$ ADO invariant as a sum of partial traces of the level $N$ Lawrence representations. 

\begin{theorem}($N^{th}$ ADO invariant from level N Lawrence representations) \label{THEOREM}\\
Let us fix $N \in \N$ be the colour of the invariant and $\lambda \in \C$. Then if an oriented knot $L$ is a closure of a braid $\beta_n \in B_n$, we have the following homological formula:
\begin{equation}
\begin{aligned}
\Phi_{N}(L,\lambda) ={\xi_N}^{(N-1)\lambda w(\beta_n)} \cdot & \xi_{N}^{(n-1)(1-N)\lambda} \cdot \\
& \cdot \mathlarger{\sum}_{m=0}^{(N-1)(n-1)} \xi_{N}^{-2m(1-N)}  
\ hptr_0\left( {}^{N} \! l_{n,m} (\beta_n) \right)
\end{aligned}
\end{equation}
In this formula, $w(\beta_n)$ is the writhe of the braid.
\end{theorem}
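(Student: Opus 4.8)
The plan is to build an explicit dictionary between the algebraic definition of $\Phi_N(L,\lambda)$ through the Reshetikhin--Turaev machinery at the root of unity $\xi_N$ and the homological data, and then to read off the formula one weight space at a time. First I would recall the algebraic construction: writing $L$ as the closure $\widehat{\beta_n}$, the invariant $\Phi_N(L,\lambda)$ is computed by letting the $R$-matrix representation of $B_n$ act on the $n$-fold tensor power $V^{\otimes n}$ of the $\lambda$-weighted Verma-type module, correcting by the ribbon/twist contribution --- which is what accounts for the writhe factor $\xi_N^{(N-1)\lambda w(\beta_n)}$ --- and then taking the appropriate modified (partial) quantum trace, obtained by cutting one strand and reading off the resulting scalar endomorphism of $V$. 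Since the braided action commutes with the Cartan action, it preserves the weight space decomposition $V^{\otimes n}=\bigoplus_m (V^{\otimes n})_m$, where $m$ records the total number of lowering steps; this immediately reduces the global trace to a sum over the weight spaces indexed by $m$.

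Next I would extract the pivotal coefficients. The modified trace weights the weight-$m$ summand by the eigenvalue of the pivotal element on that space; at $q=\xi_N$ this eigenvalue produces exactly the coefficient $\xi_N^{-2m(1-N)}$, while the uniform contribution of the highest weight, together with the renormalisation coming from the cut strand, yields the global normalisation $\xi_N^{(n-1)(1-N)\lambda}$. Once the pivotal element is pinned down this is linear bookkeeping, so I expect this step to be routine.

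The central step is the identification of the two braid group actions. Using the explicit form of the $R$-matrix (already exploited earlier in the paper to prove well-definedness of the specialised action on the truncated subspace ${}^N \! H_{n,m}$), I would show that the specialisation $\psi_{\xi_N,\lambda}$ of the level $N$ Lawrence representation ${}^{N} \! l_{n,m}(\beta_n)$ is isomorphic, as a $B_n$-representation, to the action on the weight space $(V^{\otimes n})_m$. The truncation to partitions whose components are all strictly smaller than $N$ corresponds precisely to the vanishing of the $N$-th power of the lowering operator at the root of unity, which kills any strand carrying $N$ or more units; this is the conceptual heart of the argument and is exactly what replaces Ito's extra truncation. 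The range $0\le m\le (N-1)(n-1)$ is then forced: once the strand used for closing is set aside there remain $n-1$ strands, each able to carry at most $N-1$ units.

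Finally I would match the traces: under the above isomorphism, the quantum modified partial trace that closes the braid corresponds to the homological partial trace $hptr_0$, the subscript recording the weight-zero sector reached after closing. Assembling the writhe correction, the global normalisation, the pivotal coefficients and the per-weight-space identification then yields the stated sum. I expect the main obstacle to be precisely this last matching together with the third step: because the category is non-semisimple one cannot use ordinary categorical traces throughout, and one must verify that $hptr_0$ reproduces the modified/partial quantum trace \emph{exactly} --- including every normalisation constant --- rather than merely up to an undetermined scalar, which is where the delicate comparison of explicit matrix coefficients between the $R$-matrix action and the homological monodromy will be needed.
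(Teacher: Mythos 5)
Your proposal follows essentially the same route as the paper's proof: start from the algebraic formula $\Phi_N(L,\lambda)={\xi_N}^{(N-1)\lambda w(\beta_n)}\langle ptr_{U_\lambda}\bigl((Id\otimes K^{1-N})\circ\varphi^{\xi_N,\lambda}_n(\beta_n)\bigr)\rangle$ with the strand cut at $v_0$, split the resulting matrix-coefficient sum over the weight spaces $V^N_{n,m}$, read off the eigenvalue of $K^{1-N}$ on the weight-$m$ block to extract the coefficients $\xi_N^{(n-1)(1-N)\lambda}\,\xi_N^{-2m(1-N)}$, and transport the remaining sums through the basis-preserving identification of Corollaries \ref{C:1} and \ref{C:2} to land on $hptr_0\bigl({}^{N}\!l_{n,m}(\beta_n)\bigr)$. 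The final trace-matching you flag as delicate is in fact immediate here, since $hptr_0$ is defined precisely so that the multiarcs $\F_e$ with $e_1=0$ correspond to the monomials $v_0\otimes v_{i_2}\otimes\cdots\otimes v_{i_n}$ under Martel's isomorphism, so no further normalisation comparison is required.
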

One feature of this formula is that it partially answers and explains Ito's question. It does not have any truncation and so the coefficients of the ADO invariants are more clear from the homological point of view. 

Also, we would like to remark that this model is somehow a counterpart of Martel's result for the coloured Jones polynomials, both given as {\em sums of traces on homological representations}. This one concerns the root of unity case while Martel's model concerns the generic case.  
\subsection{Comparison and idea of the proof}
The difference between this model and the model from \cite{Ito2} is that the former is a sum of traces on quotients of homological representations. Here, we do not take any quotients but instead we pay the price of having a partial homological trace. However, the direct homological actions have an advantage from the geometrical  point of view and also the model provides a very concrete algorithm for computations. The explanation ``behind the scenes'' of this difference is that Ito's strategy is based on highest weight spaces and ours is based on weight spaces. Then the result from \cite{Ito2} uses a Kohno's type identification for quantum representations on highest weight spaces and homological representations. We use a more explicit Kohno's type identification due to Martel \cite{Martel} for quantum representations on weight spaces and homological representations.

More specifically, in \cite{Ito2} the splitting of tensor powers of the quantum representation at roots of unity is used by means of highest weight spaces. Then the homological correspondent for these highest weight spaces at roots of unity is constructed, which has to have a truncation. On the other hand, we use weight spaces instead of highest weight spaces, and the fact that their direct sum recovers the same tensor power of the quantum representation at roots of unity. A technical point is that we set up the whole construction such that it comes from the quantum group over two variables. Then, we show that the weight spaces at roots of unity, naturally correspond to subrepresentations in homological representations rather than quotients. The last technical part is to define the concept of partial trace in a homological context. We do this by translating the effect of the quantum trace on tensor powers of the quantum representation at roots of unity, restricted onto certain weight spaces, which in turn correspond to homological representations.  
\subsection{Structure of the paper}

 In Section \ref{S:2}, we describe the version of the quantum group that we use (which has divided powers of one generator), as well as the representation theory correlated to it. Then, we show that we can see the ADO invariants from a certain specialisation of the Verma module associated to the two-variable quantum group. In the last part of the section, we show explicitely the formula that we will use on the quantum side. Then, in Section \ref{S:3}, we set up the homological part of the picture, and define the Lawrence representation that we work with. Section \ref{S:4} is devoted to the construction of a subrepresentation of the Lawrence representation, which we call {\em level $N$ Lawrence representation}. After that, we define the notion of {\em homological partial trace of weight zero} on this Lawrence subrepresentation. In the last part, in Section \ref{S:5}, we put these notions together and prove a homological model for the $N^{th}$ ADO invariant as a weighted sum of homological partial traces on the level $N$ Lawrence representations.  
 
\subsection*{Acknowledgements} 
 
 This paper was prepared at the University of Oxford, and I acknowledge the support of the European Research Council (ERC) under the European Union's Horizon 2020 research and innovation programme (grant agreement No 674978).

\section{Notations}  
In the paper, we will use certain specialisations of coefficients of some homology modules. The precise definition of a specialisation is the following.
\begin{notation}(Specialisation)\label{N:spec}\\ 
Let $R$ be a ring and $M$ an $R$-module. Suppose that we fix a basis $\mathscr B$ of the module $M$.
If $S$ is another ring, let us assume that we fix a specialisation of the coefficients, given by a ring morphism:
$$\psi: R \rightarrow S.$$
Then, the specialisation of the module $M$ by the change of coefficients $\psi$ is the following $S$-module: $$M|_{\psi}:=M \otimes_{R} S$$ which has the corresponding basis given by $$\mathscr B_{M|_{\psi}}:=\mathscr B \otimes_{R}1 \in M|_{\psi}. $$
\end{notation}
\begin{notation}(Partial traces)
For a tensor power of two vector spaces $V,W$, we denote the partial trace with respect to the elements of $W$ by:
$$ptr_V:End(V \otimes W)\rightarrow End(V).$$
\end{notation}
Also, for the representation theory part, we will use the following notations for quantum factorials:
$$ \{ x \} :=\qs^x-\qs^{-x} \ \ \ \ [x]_{\qs}:= \frac{\qs^x-\qs^{-x}}{\qs-\qs^{-1}}$$
$$[n]_\qs!=[1]_\qs[2]_\qs...[n]_\qs$$
$${n \brack j}_\qs=\frac{[n]_\qs!}{[n-j]_\qs! [j]_\qs!}.$$
\section{The ADO invariant from generic Verma modules}\label{S:2}
In this part, we present the set-up on the representation theory side which we use, in order to obtain the ADO invariant. The usual definition of these invariants starts from the version of the quantum group $U_q(sl(2))$ over one variable, which is a root of unity. Here, we will use the version of the quantum group over two variables, given by the divided powers of one of the  generators.  We showed in \cite{Cr3} that we can specialise this representation theory later on, towards one variable, and obtain the ADO invariant in this manner. For the precise construction of this set-up, we refer to \cite{Cr3}, Section 3. Below, we outline the main points of the construction. 

\begin{definition}(Quantum group)
Let $U_q(sl(2))$ be the Hopf algebra over the ring with two variables $\Z[q^{\pm 1},s^{\pm 1}]$, genererated by $\{ E,F^{(n)}, K^{\pm1} | \ n \in \mathbb N^{*} \}$ subject to the relations:
\begin{equation*}
\begin{cases}
KK^{-1}=K^{-1}K=1; \ \ \ KE=\qs^2EK; & \ \ \ KF^{(n)}=\qs^{-2n} F^{(n)}K;\\
F^{(n)}F^{(m)}= {n+m \brack n}_{\qs} F^{(n+m)}&\\
[E,F^{(n+1)}]=F^{(n)}(q^{-n}K-q^{n}K^{-1}).
\end{cases}
\end{equation*} 
\end{definition}
This quantum group has an associated Verma module, given by the following description:
\begin{definition}(\cite{Cr3}, \cite{JK}) (Generic Verma module)

Let $\hat{V}$ be an infinite dimensional $\Z[q^{\pm 1}, s^{\pm 1}]$-module generated by a sequence of vectors $\{v_0, v_1,...\}$, with the following $U_q(sl(2))$-actions:
\begin{equation}\label{action}  
\begin{cases}
Kv_i=s\qs^{-2i}v_i,\\
Ev_i=v_{i-1},\\
F^{(n)}v_i = {n+i \brack i}_\qs \prod _{k=0}^{n-1} (s\qs^{-k-i}-s^{-1} \qs^{k+i}) v_{i+n}.
\end{cases}
\end{equation}
\end{definition}
Further on, in order to construct knot invariants, we need to use finite dimensional representations. We start with a natural number $N \in \N$ and $\xi_N$ the standard primitive $2N^{th}$ root of unity. 
\begin{notation}
Let us fix $\lambda \in \C$. For the pair
$(q=\xi_N=e^{\frac{2 \pi i}{2N}}, \lambda \in \C )$ we consider the specialisation
\begin{equation}
\begin{cases}
 \eta_{\xi_N,\lambda}:\Z[q^{\pm},s^{\pm}]\rightarrow\C \\
\eta_{\xi_N,\lambda}(s)= {\xi}^{\lambda }_N.
\end{cases}
\end{equation}
\end{notation}
For the non-semisimple invariants at roots of unity, we will start with the set-up over two variables, and then specialise through $\eta_{\xi_N,\lambda}$.

\begin{definition}
We denote the quantum group at roots of unity by $$U_{\xi_N}(sl(2))=U_q(sl(2))\otimes_{\eta_{\xi_N,\lambda}}\C.$$
Correspondingly, we consider the specialised Verma module $$\hat{V}_{\xi_N,\lambda}=\hat{V}\otimes_{\eta_{\xi_N,\lambda}}\C$$
which is a representation of $U_{\xi_N}(sl(2))$. 
Moreover, let us consider the vector subspace generated by the first $N$ vectors over $\C$:
$$U_{\lambda}:=<v_{0},...,v_{N-1}>_{\C} \subseteq \hat{V}_{\xi_N,\lambda}.$$
\end{definition}
The quantum group $U_q(sl(2))$ has an $R-$matrix over $\Z[q^{\pm 1}, s^{\pm 1}]$, which gives a braiding $\mathscr R$, leading to a braid group representation (as presented in \cite{Cr3} Section 3.1):
\begin{equation}
\begin{aligned}
\hat{\varphi}_n: B_n \rightarrow Aut_{U_q(sl(2))}\left(\hat{V}^{\otimes n}\right) \ \ \ \ \ \ \ \ \ \ \ \ \ \ \ \ \ \ \ \ \ \ \\
\ \ \ \ \ \ \ \ \ \ \ \ \ \sigma _i^{\pm 1}\rightarrow Id_V^{\otimes (i-1)}\otimes \mathscr R^{\pm 1}  \otimes Id_V^{\otimes (n-i-1)}.
\end{aligned}
\end{equation}
Further on, this action will induce an action at roots of unity provided by the formula:
$$\ \ \hat{\varphi}^{\xi_N,\lambda}_{n}  :B_n \rightarrow \Aut( \hat{V}_{\xi_N,\lambda}^{\otimes n}). $$
A key point in \cite{Cr3} is based on the remark that even if $U_{\lambda}$ is not a sub-representation of $\hat{V}_{\xi_N,\lambda}$ over the quantum group $U_{\xi_N}(sl(2))$, the braid group action $\hat{\varphi}^{\xi_N,\lambda}_{n}$ behaves well with respect to this inclusion. More precisely, we have the following property. 
\begin{proposition} (\cite{Cr3}-Lemma 3.1.7)( Braid group action at roots of unity)\label{P:0}
  
 The action $\hat{\varphi}^{\xi_N,\lambda}_{n}$ on $\hat{V}_{\xi_N,\lambda}^{\otimes n}$ preserves the vector subspace $U_{\lambda}^{\otimes n}$, meaning that following diagram commutes:
 
\begin{center}
\begin{tikzpicture}
[x=1.2mm,y=1.4mm]

\node (b1)  [color=blue]             at (0,10)    {$U_{\lambda}^{\otimes n}$};
\node (t1) [color=black] at (30,10)   {$\hat{V}_{\xi_N,\lambda}^{ \otimes n}$};
\node (b2) [color=blue] at (0,0)  {$\hat{\varphi}^{\xi_N,\lambda}_{n}$};
\node (t2)  [color=black]             at (30,0)    {$\hat{\varphi}^{\xi_N,\lambda}_{n}$};
\node (d2) [color=black] at (15,10)   {$\hookrightarrow$};
\node (d2) [color=black] at (15,12)   {$\iota$};
\node (d2) [color=black] at (0,5)   {$\circlearrowleft$};
\node (d2) [color=black] at (30,5)   {$\circlearrowleft$};
\node (d2) [color=black] at (15,5)   {$\equiv$};
\end{tikzpicture}
\end{center} 
 \end{proposition}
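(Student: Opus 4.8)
The plan is to reduce the statement to a single computation on two tensor factors and then to exploit a vanishing of quantum binomials at the root of unity. Since each braid generator $\sigma_i^{\pm 1}$ acts by $\Id^{\otimes(i-1)} \otimes \mathscr R^{\pm 1} \otimes \Id^{\otimes(n-i-1)}$ and $U_\lambda^{\otimes n}$ is literally the tensor product of the individual $U_\lambda$ factors, it suffices to show that the braiding $\mathscr R^{\pm 1}$ preserves $U_\lambda \otimes U_\lambda$ inside $\hat V_{\xi_N,\lambda} \otimes \hat V_{\xi_N,\lambda}$. First I would use the explicit form of the $R$-matrix recorded at the end of Section \ref{S:2}: up to the flip $\tau$ (which preserves $U_\lambda \otimes U_\lambda$, since both factors range over the same subspace), it is a product of a diagonal operator $\mathcal D$, acting on each $v_i \otimes v_j$ by a nonzero scalar, with a sum $\sum_{n \ge 0} c_n\, E^n \otimes F^{(n)}$. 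As $\mathcal D$ manifestly preserves $U_\lambda \otimes U_\lambda$, everything is governed by how the summands $E^n \otimes F^{(n)}$ move the indices.

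Next I would evaluate $(E^n \otimes F^{(n)})(v_i \otimes v_j)$ for $v_i, v_j \in U_\lambda$, that is for $0 \le i, j \le N-1$. Using the actions $(\ref{action})$, one has $E^n v_i = v_{i-n}$ (and $0$ if $n > i$), so a summand contributes only when $n \le i \le N-1$, and then its first factor has index $i-n \in \{0,\dots,N-1\}$ and stays inside $U_\lambda$ automatically. Its second factor is $F^{(n)} v_j = {n+j \brack j}_{\xi_N} \prod_{k=0}^{n-1}(s\, \xi_N^{-k-j} - s^{-1} \xi_N^{k+j})\, v_{j+n}$, which leaves $U_\lambda$ exactly when $j+n \ge N$. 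The heart of the argument is that on precisely these dangerous summands the coefficient vanishes: the contributing summands already satisfy $n \le N-1$, and $j \le N-1$, so the Gaussian binomial ${n+j \brack j}_{\xi_N}$ has numerator $[n+j]_{\xi_N}!$ containing the factor $[N]_{\xi_N} = 0$, while its denominator $[n]_{\xi_N}!\,[j]_{\xi_N}!$ contains no such factor; hence ${n+j \brack j}_{\xi_N} = 0$ and the summand is killed. Therefore each summand sends $v_i \otimes v_j$ either inside $U_\lambda \otimes U_\lambda$ or to zero, which gives $\mathscr R(U_\lambda \otimes U_\lambda) \subseteq U_\lambda \otimes U_\lambda$.

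I expect the main subtlety to be twofold. First, one must check that this is a genuine vanishing and not a $0/0$ indeterminacy; this is exactly where the constraint $n \le N-1$ (forced by $E^n v_i = 0$ whenever $n > i$, together with $i \le N-1$) is essential, since it guarantees that neither $[n]_{\xi_N}!$ nor $[j]_{\xi_N}!$ contains a factor $[N]_{\xi_N}$, so the evaluation of the binomial is unambiguously zero. It is worth stressing that this is precisely why $U_\lambda$ fails to be a quantum-group subrepresentation (the bare operator $F^{(N)}$ does carry $v_0$ to a nonzero multiple of $v_N$) and yet is preserved by the braiding: the $E^n$ tensor factor in the $R$-matrix self-truncates the sum before the dangerous $F^{(n)}$ terms can act. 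Second, to cover $\sigma_i^{-1}$ I would verify that $\mathscr R^{-1}$ also preserves $U_\lambda \otimes U_\lambda$; since the summands with $n \ge 1$ strictly decrease the first-factor index, the leading ($n=0$) part of $\mathscr R$ is the flip composed with the nonzero diagonal $\mathcal D$, so $\mathscr R$ restricts to an invertible operator on the finite-dimensional space $U_\lambda \otimes U_\lambda$, whence $\mathscr R^{-1}$ preserves it as well. Assembling the two inclusions over all generators $\sigma_i^{\pm 1}$ then yields the commuting square claimed in the proposition.
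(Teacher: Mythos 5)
Your argument is correct and is essentially the intended one: the paper itself gives no proof of this proposition but imports it from \cite{Cr3} (Lemma 3.1.7), where the mechanism is exactly what you describe --- reduction to $\mathscr R^{\pm1}$ on two tensor factors, self-truncation of the sum $\sum_n c_n\, E^n\otimes F^{(n)}$ to $n\le N-1$ via $E^nv_i=0$ for $n>i$, and the vanishing of ${n+j \brack j}_{\xi_N}$ (numerator containing $[N]_{\xi_N}=0$, denominator not) on precisely the summands that would leave $U_\lambda\otimes U_\lambda$, together with the triangularity argument for $\mathscr R^{-1}$. The only cosmetic slip is that the explicit $R$-matrix is not actually recorded in Section \ref{S:2} of this paper but only in \cite{Cr3}, Section 3.1.
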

 \begin{notation}
 We denote the restriction of the braid group action $\hat{\varphi}^{\xi_N,\lambda}_{n}$ onto the finite dimensional part by:
 $$\varphi^{\xi_N,\lambda}_{n}  :B_n \rightarrow \Aut_{\C}(  {U_{\lambda}}^{\otimes n}).$$
 \end{notation}
 \subsection{Weight spaces}
In this part, we consider certain subspaces inside the tensor powers of quantum representations, called weight spaces, which will have nice homological correspondents.  
 \begin{definition}(Weight spaces)  
 Let us fix $n,m\in \N$ and a natural number $N \in \N$ (which will correspond to the colour of the invariant). We consider the following.
 
{\bf 1) Generic weight spaces }\\
The $n^{th}$ weight space of weight $m$ corresponding to the generic Verma module $\hat{V}$:
\begin{equation}
\hat{V}_{n,m}:= \{v \in \hat V^{\otimes n} \mid Kv=s^nq^{-2m}v \}.
\end{equation}
The $n^{th}$ weight space of weight $m$ corresponding to the $N^{th}$ finite part inside $\hat{V}$:
\begin{equation}
V^N_{n,m}:=\left( \hat{V}_{n,m} \ \cap <v_0,...,v_{N-1}>_{\Z[q^{\pm1},s^{\pm 1}]} \right) \subseteq \hat V^{\otimes n}.
\end{equation}

{\bf 2) Weight spaces at roots of unity }\\
We introduce the notion of the $n^{th}$ weight space of weight $m$ corresponding to the finite dimensional part at roots of unity, which is given by:
\begin{equation}
 V^{\xi_N,\lambda}_{n,m}:=\{v\in U^{\otimes n}_{\lambda} \mid Kv=\xi_N^{n\lambda-2m}v \}\subseteq U^{\otimes n}_{\lambda}.
 \end{equation}
\end{definition}
\begin{notation}
Let us consider the following indexing sets:
$$E_{n,m}=\{e=(e_1,...,e_{n})\in \N^{n} \mid e_1+...+e_{n}=m \}.$$
$$E^N_{n,m}=\{e=(e_1,...,e_{n})\in E_{n,m} \mid 0 \leq e_1,...,e_n \leq N-1 \}.$$
\end{notation}
\begin{remark}(Basis for weight spaces)\\
A basis for the generic weight space is given by:
$$\mathscr{B}_{\hat{V}_{n,m}}=\{v_e:= v_{e_1}\otimes... \otimes v_{e_n} \mid  e=(e_1,...,e_n) \in E_{n,m}\}.
$$
A basis for the $N^{th}$ finite weight space is given by:
$$\mathscr{B}_{V^N_{n,m}}=\{v_e:= v_{e_1}\otimes... \otimes v_{e_n} \mid e=(e_1,...,e_n) \in E^N_{n,m}\}.
$$
\end{remark}
\begin{remark}(Braid group actions \cite{Cr3})
The generic action $\hat{\varphi}_n$ induces a braid group representation onto generic weight spaces, which we denote by: $$\hat{\varphi}_{n,m}:B_n \rightarrow \Aut( \hat{V}_{n,m}).$$
\end{remark}
 We notice that there is a problem if we look carefully at the definition of the $R$-matrix over two variables, because the corresponding action does not preserve the inclusion of the $N^{th}$ finite weight space over two parameters into the generic one:
$$V^N_{n,m}\subseteq \hat{V}_{n,m}.$$ 

However, something interesting happens if we specialise $q$ to a $2N^{th}$ root of unity. This comes from the specific formula of the specialisation of the $R-$matrix, towards one variable. We refer to \cite{Cr3} (Section 3.2) for the details of this phenomenon. The main property is the following.
\begin{proposition}(\cite{Cr3})\label{P:1} 
The specialised action at roots of unity given by $$\hat{\varphi}_{n,m}|_{\eta_{\xi_N,\lambda}}: B_n\rightarrow \Aut( \hat{V}_{n,m}|_{\eta_{\xi_N},\lambda})$$ preservers the inclusion of the specialised $N^{th}$ weight space into the specialisation of the generic one, leading to a braid group representation, which we denote by:
\begin{equation}
\begin{aligned}
& \varphi^N_{n,m}|_{\eta_{\xi_N,\lambda}}: B_n\rightarrow \Aut\left( V^N_{n,m}|_{\eta_{\xi_N,\lambda}}\right)\\
& \varphi^N_{n,m}|_{\eta_{\xi_N,\lambda}}=\hat{\varphi}_{n,m}|_{\eta_{\xi_N,\lambda}} \big( \text{ restricted to }V^N_{n,m}|_{\eta_{\xi_N,\lambda}} \big).
\end{aligned}
\end{equation}
\end{proposition}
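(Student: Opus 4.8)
The plan is to deduce the statement from Proposition \ref{P:0} together with the compatibility of the braid group action with the weight grading; the only genuinely new ingredient is a careful bookkeeping of how the weight decomposition behaves under the specialisation $\eta_{\xi_N,\lambda}$. The key identification I would establish is
$$ V^N_{n,m}|_{\eta_{\xi_N,\lambda}} = U_{\lambda}^{\otimes n} \cap \hat{V}_{n,m}|_{\eta_{\xi_N,\lambda}}, $$
where all three spaces are regarded as coordinate subspaces of $\hat{V}_{\xi_N,\lambda}^{\otimes n}$ with respect to the monomial basis $\{ v_e \mid e \in \N^n \}$. Once this is in place, the stability of the subspace under the braid action is immediate.

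First I would record that over the two--variable ring the representation $\hat{\varphi}_n$ is block diagonal with respect to the weight decomposition $\hat{V}^{\otimes n} = \bigoplus_m \hat{V}_{n,m}$: the basis vectors $v_e$ are $K$--eigenvectors with eigenvalues $s^n q^{-2m}$ ($m=\sum_i e_i$), which are pairwise distinct Laurent monomials in $q$ for distinct $m$, and the braiding commutes with $\Delta(K)$. Since $\eta_{\xi_N,\lambda}$ is applied entrywise to the matrices of $\hat{\varphi}_n$, it preserves this block structure, so $\hat{\varphi}_n^{\xi_N,\lambda}$ preserves each individual summand $\hat{V}_{n,m}|_{\eta_{\xi_N,\lambda}}$, with restriction exactly $\hat{\varphi}_{n,m}|_{\eta_{\xi_N,\lambda}}$. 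I would emphasise that this must be argued through the two--variable block structure rather than via $K$ after specialisation: at $q=\xi_N$ the eigenvalues $\xi_N^{n\lambda-2m}$ depend only on $m$ modulo $N$, so the specialised $K$--eigenspaces are strictly coarser than the individual weight spaces and cannot by themselves detect the grading by $m$.

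Next I would invoke Proposition \ref{P:0}, which gives that $\hat{\varphi}_n^{\xi_N,\lambda}$ preserves $U_{\lambda}^{\otimes n}$. Combined with the previous step, the action then preserves the intersection $U_{\lambda}^{\otimes n} \cap \hat{V}_{n,m}|_{\eta_{\xi_N,\lambda}}$. Because $U_{\lambda}^{\otimes n}$ is spanned by those $v_e$ with $0 \le e_i \le N-1$ and $\hat{V}_{n,m}|_{\eta_{\xi_N,\lambda}}$ by those $v_e$ with $\sum_i e_i = m$, their intersection is spanned by $\{ v_e \mid e \in E^N_{n,m}\}$, which is precisely the basis of $V^N_{n,m}|_{\eta_{\xi_N,\lambda}}$. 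This proves the displayed identification, and hence that the restriction of $\hat{\varphi}_{n,m}|_{\eta_{\xi_N,\lambda}}$ to $V^N_{n,m}|_{\eta_{\xi_N,\lambda}}$ is well defined, yielding the representation $\varphi^N_{n,m}|_{\eta_{\xi_N,\lambda}}$.

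The main obstacle is conceptual rather than computational, and it is exactly the point flagged before the statement: over two variables the single braiding $\mathscr R$ sends a tensor $v_i \otimes v_j$ with $i,j \le N-1$ to a combination which, through the divided powers $F^{(k)}$, can reach indices $\ge N$, so the finite part is genuinely not preserved; the content is that these off--finite--part coefficients vanish at $q=\xi_N$. In the route above that vanishing is entirely packaged inside Proposition \ref{P:0}, so the remaining difficulty reduces to the grading bookkeeping. Alternatively one could prove the statement directly, writing out the relevant matrix coefficients of $\mathscr R|_{\eta_{\xi_N,\lambda}}$ and checking their vanishing from $[N]_{\xi_N}=0$, but this is more laborious and redundant given Proposition \ref{P:0}.
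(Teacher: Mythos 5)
Your argument is correct, but it takes a different route from the one the paper relies on: Proposition \ref{P:1} is not proved in this paper at all, it is imported from \cite{Cr3}, and the surrounding text indicates that the proof there goes through the explicit formula for the specialised $R$-matrix at $q=\xi_N$, checking directly that the matrix coefficients which would carry a vector out of the finite part vanish. You instead treat Proposition \ref{P:0} as the black box containing all of that root-of-unity vanishing, and combine it with the two-variable weight grading via the identification of $V^N_{n,m}|_{\eta_{\xi_N,\lambda}}$ with the intersection $U_\lambda^{\otimes n}\cap \hat V_{n,m}|_{\eta_{\xi_N,\lambda}}$ of two invariant coordinate subspaces. This is a clean and logically sound reduction within the paper's own ordering (Proposition \ref{P:0} is stated earlier and independently, and the preservation of the generic weight spaces by $\hat\varphi_n$ over $\Z[q^{\pm1},s^{\pm1}]$ is also asserted by the paper), and your observation that the grading must be tracked through the two-variable block structure --- since at $q=\xi_N$ the $K$-eigenvalue $\xi_N^{n\lambda-2m}$ only sees $m$ modulo $N$ --- is a genuine subtlety that you handle correctly. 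What the direct $R$-matrix computation buys is independence from Proposition \ref{P:0}; your derivation in effect shows the two statements are equivalent modulo the weight grading, since \ref{P:0} is recovered from \ref{P:1} by summing over $m$ as in Corollary \ref{C:3}.
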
 
In the following we will see that one advantage of these level $N$ weight spaces is that they are defined over two variables. Secondly, their direct sum recovers the whole tensor power of the representation $U_{\lambda}$, as below.
\begin{proposition}
The specialisations of the $N^{th}$ weight spaces through $\eta_{\xi_N,\lambda}$ recover the tensor power of the module at roots at unity as below:
\begin{equation}
U^{\otimes n}_{\lambda}= \bigoplus _{m=0}^{n(N-1)} V_{n,m}^N|_{\eta_{\xi_N,\lambda}}.
 \end{equation}
\end{proposition}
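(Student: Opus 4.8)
The plan is to deduce the decomposition from a partition of the standard basis of $U_\lambda^{\otimes n}$, taking advantage of the fact that the level $N$ weight spaces $V^N_{n,m}$ are defined over the two-variable ring $\Z[q^{\pm 1},s^{\pm 1}]$ and only afterwards specialised. First I would record that $U_\lambda^{\otimes n}$ is the specialisation through $\eta_{\xi_N,\lambda}$ of the free module $\langle v_0,\dots,v_{N-1}\rangle^{\otimes n}$, and hence is a $\C$-vector space with basis $\{ v_e := v_{e_1}\otimes\cdots\otimes v_{e_n} \mid e\in\{0,\dots,N-1\}^n \}$ of dimension $N^n$. Each such multi-index has a well-defined total weight $m=e_1+\cdots+e_n$, which ranges over the integers $0,1,\dots,n(N-1)$, the extreme values being attained by $e=(0,\dots,0)$ and $e=(N-1,\dots,N-1)$.

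Next I would identify each summand combinatorially. Since $K$ is group-like, it acts diagonally on tensors, so before specialising, $v_e$ lies in the generic weight space $\hat V_{n,m}$ exactly when $\sum_i e_i = m$, because the eigenvalues $s^n q^{-2\sum_i e_i}$ and $s^n q^{-2m}$ agree as monomials in $\Z[q^{\pm 1},s^{\pm 1}]$ precisely when the exponents match. Intersecting with $\langle v_0,\dots,v_{N-1}\rangle^{\otimes n}$ and using the basis description in the Remark on bases, the module $V^N_{n,m}$ is the free $\Z[q^{\pm 1},s^{\pm 1}]$-submodule with basis $\{ v_e \mid e\in E^N_{n,m}\}$. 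The crucial elementary observation is then that the index sets $E^N_{n,m}$, for $m=0,\dots,n(N-1)$, are pairwise disjoint (a fixed $e$ has a single total weight) and their union is all of $\{0,\dots,N-1\}^n$, since the constraint $0\le e_i\le N-1$ is exactly the one defining the basis of $U_\lambda^{\otimes n}$.

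It remains to check that passing to the specialisation preserves both the injection of each summand into $U_\lambda^{\otimes n}$ and the directness of the sum. Here I would use that $V^N_{n,m}$ is a direct summand of the free module $\langle v_0,\dots,v_{N-1}\rangle^{\otimes n}$, being spanned by a subset of its standard basis; hence the inclusion is a split monomorphism of free modules and stays injective after applying $-\otimes_{\Z[q^{\pm 1},s^{\pm 1}]}\C$. Under the induced inclusions $V^N_{n,m}|_{\eta_{\xi_N,\lambda}}\hookrightarrow U_\lambda^{\otimes n}$, the image of each summand is the span of $\{v_e\mid e\in E^N_{n,m}\}$, and the partition of the index set from the previous step immediately yields the internal direct sum decomposition, completing the argument.

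I expect the only genuinely delicate point to be conceptual rather than computational: at the root of unity the $K$-eigenvalues $\xi_N^{\,n\lambda-2m}$ depend on $m$ only modulo $N$, so distinct weights $m$ and $m+N$ collapse to the same eigenvalue, and the decomposition is strictly finer than the $K$-eigenspace decomposition of $U_\lambda^{\otimes n}$. This is exactly why the summands must be defined over two variables first and then specialised, rather than being read off as eigenspaces after specialisation; keeping this separation clean is the heart of the statement, while the remaining steps are routine linear algebra.
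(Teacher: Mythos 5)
Your proposal is correct and follows essentially the same route as the paper: establish the weight-space decomposition of $\left(\langle v_0,\dots,v_{N-1}\rangle\right)^{\otimes n}$ over the two-variable ring $\Z[q^{\pm 1},s^{\pm 1}]$ first, then apply the specialisation $\eta_{\xi_N,\lambda}$. Your additional observations --- that the inclusions are split monomorphisms of free modules so the direct sum survives $-\otimes\C$, and that the $K$-eigenvalues collapse modulo $N$ at the root of unity so the decomposition cannot be read off as eigenspaces after specialising --- make explicit points the paper leaves implicit, but they do not change the argument.
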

\begin{proof}
We start with the remark that: 
\begin{equation}\label{eq:1}
\hat{V}^{\otimes n}=\bigoplus _{m=0}^{\infty} \hat{V}_{n,m}.
\end{equation}
Further on, this direct sum recovers the tensor power of the vector space generated by the first $N$-vectors:
\begin{equation}\label{eq:2}
\left( <v_0,...,v_{N-1}>_{\Z[s^{\pm1},q^{\pm 1}]}\right)^{\otimes n}=\bigoplus _{m=0}^{n(N-1)} V^{N}_{n,m}.
\end{equation}
This comes from equation \eqref{eq:1} and the remark that the maximal weight that we can rich with the first $N$ vectors is $n(N-1)$. Then, by specialising  the coefficients from equation \eqref{eq:2} through $\eta_{\xi_N,\lambda}$, we conclude the decomposition from the statement. 
\end{proof}
Now, we will use that the braid group actions presented above preserve weight spaces (for details, see \cite{Cr3}). This shows that the splitting from equation \eqref{eq:1} is preserved by the braid group action. This means that we have the following property.
\begin{corollary}\label{C:3} 
The braid group actions onto the specialisations at roots of unity of the $N^{th}$ weight spaces recover the whole braid group action at roots at unity as in the following formula:
\begin{equation}
\varphi^{\xi_N,\lambda}_{n}= \bigoplus _{m=0}^{n(N-1)} \varphi^N_{n,m}|_{\eta_{\xi_N,\lambda}}.
 \end{equation}
\end{corollary}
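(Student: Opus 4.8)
The plan is to obtain the corollary by assembling the direct-sum decomposition of the vector space $U_\lambda^{\otimes n}$ with the compatibility of the braid group action with the weight grading, both of which are already at our disposal. First I would recall the decomposition
$$U^{\otimes n}_{\lambda}= \bigoplus _{m=0}^{n(N-1)} V_{n,m}^N|_{\eta_{\xi_N,\lambda}}$$
established in the preceding Proposition. This fixes the underlying direct sum of vector spaces on which both sides of the asserted identity operate, so the only thing left is to show that the action $\varphi^{\xi_N,\lambda}_{n}$ is block-diagonal with respect to it, with the $m$-th block equal to $\varphi^N_{n,m}|_{\eta_{\xi_N,\lambda}}$.

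The central point is that the braiding respects the weight grading. At the generic level the braiding $\mathscr R$ is a morphism of $U_q(sl(2))$-modules, hence commutes with the action of $K$; since the $K$-action on $\hat V^{\otimes n}$ is through the coproduct and the generic weight space $\hat V_{n,m}$ is exactly the eigenspace of $K$ with eigenvalue $s^n q^{-2m}$, the action $\hat\varphi_n$ preserves each $\hat V_{n,m}$. This is precisely the statement that the splitting in \eqref{eq:1} is stable under the braid group, and it is an identity over $\Z[q^{\pm 1},s^{\pm 1}]$, so it is transported intact by the ring morphism $\eta_{\xi_N,\lambda}$. Thus the specialised action preserves the specialised weight decomposition as well.

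Finally I would feed in the two structural results. Proposition \ref{P:0} guarantees that $\hat\varphi^{\xi_N,\lambda}_{n}$ stabilises the finite-dimensional subspace $U_\lambda^{\otimes n}$, so that $\varphi^{\xi_N,\lambda}_{n}$ is the well-defined restriction; Proposition \ref{P:1} then guarantees that this restriction further stabilises each summand $V^N_{n,m}|_{\eta_{\xi_N,\lambda}}$ and that the induced action on it is exactly $\varphi^N_{n,m}|_{\eta_{\xi_N,\lambda}}$. Combining these, $\varphi^{\xi_N,\lambda}_{n}$ splits as the direct sum of its restrictions to the weight spaces, which is the claimed formula. I do not expect a serious obstacle here: the genuinely subtle point — that passing to the $2N$-th root of unity rescues the stability of the finite weight spaces under the braiding, even though the two-variable $R$-matrix does not preserve the inclusion $V^N_{n,m}\subseteq \hat V_{n,m}$ — has already been handled in Proposition \ref{P:1}, so the work that remains is the bookkeeping of assembling the pieces over the index $m$.
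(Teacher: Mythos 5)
Your proposal is correct and follows essentially the same route as the paper, which simply invokes the fact that the braid group actions preserve weight spaces (citing \cite{Cr3}) together with the direct-sum decomposition of $U_\lambda^{\otimes n}$ from the preceding proposition, and then combines this with Propositions \ref{P:0} and \ref{P:1}. The only difference is that you supply the justification for weight-preservation directly (the braiding being a morphism of $U_q(sl(2))$-modules, hence commuting with $K$, with the identity persisting under the specialisation $\eta_{\xi_N,\lambda}$) where the paper defers to the reference.
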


\subsection{Construction of the ADO invariant}
In this part, we will see that we can construct the $N^{th}$ ADO invariant from the module $U_{\lambda}$. This set-up appeared in \cite{Cr3} and is a new way of describing this invariant, which comes from the quantum group over two variables and uses the trick of considering $U_{\lambda}$ in the way that we have described above (is a module rather than a representation over the quantum group).
\begin{notation}
For $ f \in End(U_{\lambda})$, which is a scalar times identity, we denote this scalar as follows:
$$ f=<f> \cdot Id \in \C \cdot Id_{U_{\lambda}}.$$
\end{notation}
\begin{proposition}(The $N^{th}$ ADO invariant)

Let us fix $N \in \N$ to be the colour and $\lambda \in \C$. We denote $\xi_N=e^{\frac{2\pi i}{2N}}$. Then, if an oriented knot $L$ is a closure of a braid $\beta_n \in B_n$, we have:
\begin{equation}\label{eq:A}
\begin{aligned}
\Phi_{N}(L,\lambda)=&{\xi_N}^{(N-1)\lambda w(\beta_n)}  <ptr_{U_{\lambda}}\left( (Id \otimes K^{1-N})\circ \varphi^{\xi_N,\lambda}_{n}(\beta_n)\right)>.
\end{aligned}
\end{equation}
\begin{proof}
First of all, we have to explain why the endomorphism from this formula is a scalar times the identity.
\begin{equation}
ptr_{U_{\lambda}}\left( (Id \otimes K^{1-N})\circ \varphi^{\xi_N,\lambda}_{n}(\beta_n)\right) \in \C \cdot Id_{U_{\lambda}} \subseteq End(U_{\lambda}).
\end{equation}

For the usual version of the quantum group $U_q(sl(2))$ at roots of unity, this is not at all surprising. It comes from the fact that this partial trace (which is exactly the Reshetikhin-Turaev construction applied on the knot $L$ with one strand that is cut) gives an automorphism of the represnetation $U_{\lambda}$ {\em over the quantum group}. Further on, it is known from representation theory properties that $U_{\lambda}$ is a simple representation of the quantum group. This means that any endomorphism of this representation is a scalar times identity.

However, in our case, $U_{\lambda}$ is not a representation over the quantum group $U_{\xi_N}(sl(2))$, it is just a vector space over $\C$. We remind the construction of our braid group action, from the generic one presented in Proposition \ref{P:0}: 

$$\varphi^{\xi_N,\lambda}_{n}=\hat{\varphi}_{n}|_{\eta_{\xi_N,\lambda}} \text{ restricted to } U_{\lambda}^{\otimes n}.$$ The key point is that the formulas for the action $\hat{\varphi}_{n}|_{\eta_{\xi_N,\lambda}}$ and the braid group action coming from the usual construction, presented in \cite{Ito2}, are actually the same (they are just used on different bases in our case and the case from \cite{Ito2}). Based on this correspondence and the property mentioned above, telling us that the usual Reshetikhin-Turaev corresponding to this partial quantum trace leads to a scalar, we conclude that also the partial trace from above leads to a scalar.

Further on, the formula looks similar to the formula from Section 2.3 presented in \cite{Ito2}, but the difference occurs from fact in that paper, the construction of the representation at roots of unity $U_{\lambda}$ is different than ours. However, based on the above discussion about the precise actions of $R$-matrices at the level of braid group representations, the action from \cite{Ito2} and the one that we use are actually the same (when they are computed on the standard basis).

\end{proof}

\end{proposition}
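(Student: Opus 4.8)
The plan is to recognise the right-hand side of \eqref{eq:A} as a renormalised Reshetikhin--Turaev quantity attached to the simple module $U_{\lambda}$, and to match it with the original definition of $\Phi_{N}(L,\lambda)$ from \cite{ADO}. There are two tasks: first, to explain why the bracketed partial trace is a scalar; second, to pin down the two normalising factors, the pivotal insertion $K^{1-N}$ and the writhe scalar $\xi_N^{(N-1)\lambda w(\beta_n)}$. I would organise the argument so that all the representation-theoretic content is imported from the genuine one-variable root of unity setting, where $U_{\lambda}$ is an honest module, and then transported to the present two-variable formulation through Proposition \ref{P:0}.

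For the well-definedness I would first work in the standard setting of $U_{\xi_N}(sl(2))$-modules, where $U_{\lambda}$ is a simple object of a pivotal (ribbon) category. Cutting one of the $n$ strands of the closed braid $L=\hat\beta_n$ produces a $(1,1)$-tangle whose Reshetikhin--Turaev image is a module morphism $U_{\lambda}\to U_{\lambda}$, and by Schur's lemma this is a scalar multiple of the identity. The point is then to observe, via Proposition \ref{P:0} and the explicit comparison of $R$-matrices, that the two-variable braid action $\varphi^{\xi_N,\lambda}_{n}$, once specialised through $\eta_{\xi_N,\lambda}$ and restricted to $U_{\lambda}^{\otimes n}$, agrees on the standard basis $\{v_e\}$ with the braiding used in that standard construction. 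Consequently the same partial trace is forced to be a scalar, even though in our framework $U_{\lambda}$ carries only the structure of a vector space; this is exactly the phenomenon recorded in the proof of the proposition and in \cite{Cr3}.

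Having reduced to the classical picture, I would interpret the two remaining factors categorically. The insertion $Id\otimes K^{1-N}$ is the balancing element implementing the quantum \emph{partial} trace: closing the $n-1$ auxiliary strands of $\hat\beta_n$ is effected by placing the pivotal/ribbon element $K^{1-N}$ on those strands and applying $ptr_{U_{\lambda}}$, which is precisely the categorical closure of the cut $(1,1)$-tangle. The prefactor $\xi_N^{(N-1)\lambda w(\beta_n)}$ is then the framing-anomaly correction: each crossing contributes the twist eigenvalue of the simple module $U_{\lambda}$, and since the closure of $\beta_n$ carries writhe $w(\beta_n)$, one must multiply by $\theta_{U_\lambda}^{-w(\beta_n)}$ to pass from an invariant of the framed braid to an invariant of the knot. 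I would compute this eigenvalue directly from the action of the ribbon element on $U_{\lambda}$, checking that together with the $K^{1-N}$ normalisation it yields $\xi_N^{-(N-1)\lambda}$, so that the correction is exactly $\xi_N^{(N-1)\lambda w(\beta_n)}$, and cross-check the overall constant against \cite{Ito2} and \cite{Cr3}.

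The main obstacle is the matching of braidings in the second step. A priori the two-variable $R$-matrix and the one-variable root of unity $R$-matrix are written by different formulas, and $U_{\lambda}$ is not preserved by the braiding over two variables, so one cannot simply restrict. The resolution is the subtle fact that preservation of the inclusion $V^N_{n,m}\subseteq\hat V_{n,m}$ holds only \emph{after} specialising $q$ to the $2N$-th root of unity, recorded in Proposition \ref{P:1}; I would carry out the comparison entrywise on the standard basis, using the precise specialised form of the $R$-matrix from \cite{Cr3}, to certify that the two braid group representations coincide and hence that the scalar produced by our partial trace is genuinely the ADO invariant.
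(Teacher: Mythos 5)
Your proposal is correct and follows essentially the same route as the paper: both arguments establish the scalar property by Schur's lemma in the honest $U_{\xi_N}(sl(2))$-module setting and then transfer it to the two-variable framework via the agreement, on the standard basis, of $\hat{\varphi}_{n}|_{\eta_{\xi_N,\lambda}}$ restricted to $U_{\lambda}^{\otimes n}$ with the classical braiding (Proposition \ref{P:0} and the $R$-matrix comparison from \cite{Cr3}), before identifying the resulting expression with the known formula for $\Phi_{N}$. The only divergence is one of detail rather than of method: where the paper matches the normalisations against the formula in Section 2.3 of \cite{Ito2}, you propose to re-derive the pivotal insertion $K^{1-N}$ and the writhe prefactor directly from the ribbon structure of $U_{\lambda}$, which is a harmless, slightly more self-contained substitute for that citation.
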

\section{Homological representations}\label{S:3}
In this section, we present the version of the homological representations of braid groups that we will use for our model. We will follow \cite{Martel}.
Let us fix $n,m \in \N$. Then, we denote by $\D^2\subseteq \C$ the unit disc including its boundary.

For our construction, we will use the $n-$punctured disc:
$$D_n:=\D^2 \setminus \{1,...,n \} $$
We denote by $C_{n,m}$ the unordered configuration space of $m$ points in the $n$-punctured disc, given by:
$$C_{n,m}=Conf_m(D_n)= \Big( D^{\times m}_n \setminus \{x=(x_1,...,x_m) \mid \ \exists \ i,j \ \text { such that } x_i=x_j\}\Big) / Sym_m$$
(here, $Sym_m$ is the symmetric group of order $m$).\\
Let us fix $m$ points on the boundary of the disc $d_1,...,d_m \in  \partial D_n$ and let us denote by ${\bf d}:=\{ d_1,...,d_m \} \in C_{n,m}$ to be the corresponding point in the configuration space.
\vspace{-3mm}
\begin{figure}[H]
\begin{tikzpicture}
[scale=4.3/5]
\foreach \x/\y in {0/2,2/2,4/2,2/1,2.6/1,3.6/1.15} {\node at (\x,\y) [circle,fill,inner sep=1pt] {};}
\node at (0.2,2) [anchor=north east] {$1$};
\node at (2.2,2) [anchor=north east] {$i$};
\node at (4.2,2) [anchor=north east] {$n$};
\node at (3,2) [anchor=north east] {$\sigma_i$};
\node at (2.2,1) [anchor=north east] {$d_1$};
\node at (2.9,1.02) [anchor=north east] {$d_2$};
\node at (4,1.05) [anchor=north east] {$d_m$};
\node at (2.68,2.3) [anchor=north east] {$\wedge$};
\draw (2,1.8) ellipse (0.4cm and 0.8cm);
\draw (2,2) ellipse (3cm and 1cm);
\foreach \x/\y in {7/2,9/2,11/2,8.5/1,9.5/1,10.5/1.10} {\node at (\x,\y) [circle,fill,inner sep=1pt] {};}
\node at (7.2,2) [anchor=north east] {$1$};
\node at (9.2,2) [anchor=north east] {$i$};
\node at (11.2,2) [anchor=north east] {$n$};
\node at (9,1) [anchor=north east] {$d_1$};
\node at (9.7,1.02) [anchor=north east] {$d_2$};
\node at (10.9,1.05) [anchor=north east] {$d_m$};
\node at (8.5,1.7) [anchor=north east] {$\delta$};
\draw (9,2) ellipse (3cm and 1cm);
\draw (9.5,1)  arc[radius = 5mm, start angle= 0, end angle= 180];
\draw [->](9.5,1)  arc[radius = 5mm, start angle= 0, end angle= 90];
\draw (8.5,1) to[out=50,in=120] (9.5,1);
\draw [->](8.5,1) to[out=50,in=160] (9.16,1.19);
\end{tikzpicture}
\caption{}
\label{fig2}
\end{figure}
\vspace{-7mm}
\subsection{Covering space}
In the sequel, we will use certain loops in the configuration space. We denote by $\sigma_i\in \pi_1(C_{n,m})$ the class represented by the loop in $C_{n,m}$ with $m-1$ fixed components (the base points $d_2$,...,$d_m$) and the first one going on a loop in $D_n$ around the $i^{th}$ puncture. Then, $\delta \in \pi_1(C_{n,m})$ will be the class of the loop in the configuration space with the last $(m-2)$ components constant and the first two components which swap the points $d_1$ and $d_2$, as in figure \ref{fig2}.  
\begin{notation}
Let us consider $aug:\Z^n\rightarrow \Z$ to be the map given by:
$$aug(x_1,...,x_m)=x_1+...+x_m.$$
\end{notation}
\begin{definition}(Local system)\\
Let $\rho: \pi_1(C_{n,m}) \rightarrow H_1\left( C_{n,m}\right)$ be the abelianisation map. For $m \geq 2$,  the homology of the configuration space is (see \cite{Ito2}): 
\begin{equation*}
\begin{aligned}
H_1\left( C_{n,m}\right) \simeq & \ \ \Z^{n} \ \oplus \ \Z \ \ \\
& \langle \rho(\sigma_i) \rangle \  \langle \rho(\delta) \rangle,  \ \ \ \ \ {i\in \{1,...,n\}}.
\end{aligned}
\end{equation*}
Combining the two morphisms, we consider the local system:
\begin{equation}
\begin{aligned}
\phi: \pi_1(C_{n,m}) \rightarrow \ & \Z \oplus \Z \ \ \ \ \ \ \ \ \ \\ 
 & \langle x \rangle \ \langle d \rangle\\
\hspace{3mm} \phi= (aug \oplus Id_{\Z})& \circ \rho.  
\end{aligned}
\end{equation}
\end{definition}
\begin{definition}(Covering of the configuration space)
Let $\tilde{C}_{n,m}$ be the covering of $C_{n,m}$ corresponding to the local system $\phi$. Then, the deck transformations of this covering have two variables and they are given by:
$$Deck(\tilde{C}_{n,m},C_{n,m})\simeq <x><d>.$$
\end{definition}
Let us fix ${\bf \tilde{d}} \in \tilde{C}_{n,m}$ be a lift of the base point ${\bf d}=\{d_1,...,d_m\}$ in the covering.
\begin{figure}[H]
\begin{center}
\begin{tikzpicture}\label{pic'}
[x=0.7mm,y=0.03mm,scale=0.1/5,font=\Large]
\foreach \x/\y in {-1.2/2, 0.4/2 , 1.3/2 , 2.5/2 , 3.6/2 } {\node at (\x,\y) [circle,fill,inner sep=1.3pt] {};}
\node at (-1,2) [anchor=north east] {$w$};
\node at (0.6,2.5) [anchor=north east] {$1$};
\node at (0.2,1.55) [anchor=north east] {$\color{red}\eta^e_1$};
\node at (1,2) [anchor=north east] {$\color{red} \eta^e_{e_1}$};
\node at (3.8,1.7) [anchor=north east] {$\color{red} \eta^e_{m}$};
\node (dn) at (8,1.5) [anchor=north east] {\Large \color{red}$\eta_e=(\eta^e_1,...,\eta^e_m)$};
\node at (2,5.1) [anchor=north east] {\Large \color{red}$\tilde{\eta}_e$};

\node at (4,2.5) [anchor=north east] {n};
\node at (0.8,0.8) [anchor=north east] {\bf d$=$};
\node at (0.8,4.4) [anchor=north east] {\bf $\bf \tilde{d}$};
\node at (0.3,2.6) [anchor=north east] {$\color{black!50!red}\text{Conf}e_1$};
\node at (3.6,2.6) [anchor=north east] {$\color{black!50!red}\text{Conf} e_{n}$};
\node at (2.1,3) [anchor=north east] {\huge{\color{black!50!red}$\FF_e$}};
\node at (2.1,6.2) [anchor=north east] {\huge{\color{black!50!red}$\tilde{\FF}_e$}};
\node at (-2.5,2) [anchor=north east] {\large{$C_{n,m}$}};
\node at (-2.5,6) [anchor=north east] {\large{$\tilde{C}_{n,m}$}};
\draw [very thick,black!50!red,-][in=-160,out=-10](-1.2,2) to (0.4,2);
\draw [very thick,black!50!red,->] [in=-145,out=-30](-1.2,2) to (3.6,2);
 \draw[very thick,black!50!red] (2.82, 5.6) circle (0.6);
\draw (2,2) ellipse (3.2cm and 1.3cm);
\draw (2,5.4) ellipse (3cm and 1.11cm);
\node (d1) at (1.3,0.8) [anchor=north east] {$d_1$};
\node (d2) at (1.9,0.8) [anchor=north east] {$d_{e_1}$};
\node (dn) at (2.8,0.8) [anchor=north east] {$d_m$};
\draw [very thick,dashed, red,->][in=-60,out=-190](1.2,0.7) to  (-0.2,1.9);
\draw [very thick,dashed,red,->][in=-70,out=-200](1.3,0.7) to (0,1.9);
\draw [very thick,dashed,red,->][in=-90,out=0](2.5,0.7) to (3,1.6);
\draw [very thick,dashed,red,->][in=-70,out=-200](0.8,4.4) to (3,5);
\end{tikzpicture}
\end{center}
\caption{}
\label{fig3}
\end{figure}
The construction uses the Borel-Moore homology of this covering space. For the sequel,  let us fix a point $w \in \partial  D_n$. 
\begin{definition}\label{R:1}  (\cite{Martel})
Let $H^{\text{lf},-}_m(\tilde{C}_{n,m}, \Z)$  be the Borel-Moore homology relative to part of the boundary which is represented by the fiber in $\tilde{C}_{n,m}$ over the base point $w$ (more precisely, the points in the configuration space $\tilde{C}_{n,m}$ whose projection onto $C_{n,m}$ contains $w$). This is a module over the group ring of the deck transformations, namely $\Z[x^{\pm 1}, d^{\pm 1}]$.
\end{definition}
From the property that the braid group is the mapping class group of the punctured disc and the precise form of the above local system, there is an induced action on this homology, which is compatible with its module structure:
$$B_n\curvearrowright H^{\text{lf},-}_m(\tilde{C}_{n,m}, \Z)= \Z[x^{\pm 1}, d^{\pm 1}]-\text{module}.$$
\subsection{Lawrence representation}
\begin{definition}({\bf Multiarcs} \cite{Martel})\\
a) Let us start with a partition $e \in E_{n,m}$. For each $i \in \{ 1,...,n \} $, we use a segment in $D_n$ starting at the point $w$ and finishing at the $i^{th}$ puncture, as in figure \ref{fig3}. Then, we consider the space of ordered configurations of $e_i$ points on this segment.
Further on, we denote the projection onto the unordered configuration space by:
$$\pi_m : { D}^{\times m}_n \setminus \{x=(x_1,...,x_m)| x_i=x_j \}) \rightarrow C_{n,m}$$
The product of these ordered configuration spaces on segments together with the  projection lead to a submanifold in the unordered configuration space, denoted by: 
$$\FF_e:=\pi_m (Conf_{e_1} \times ... \times Conf_{{e}_{n}})\subseteq C_{n,m}$$
b) We consider an additional imput, given by a fixed set of paths between the base point on the boundary and the red segments: $${\eta}^{e}_k: [0,1] \rightarrow D_n, k \in \{1,...,m\}$$ as in figure \ref{fig3}.
The set of all paths $\eta^e_k$ leads to a path in the configuration space, denoted by:
$$\eta^e := \pi_m \circ (\eta^{e}_1, ..., \eta^{e}_m ) : [0,1] \rightarrow C_{n,m}.$$
We remark that:
\begin{equation}
\begin{cases}
\eta^e(0)={\bf d} \\
\eta^e(1)\in \FF_e.
\end{cases}
\end{equation}
Further on, let $\tilde{\eta}^e$ be the unique lift of the path $\eta^e$ with the property  that:
\begin{equation}
\begin{cases}
\tilde{\eta}^e:  [0,1] \rightarrow \tilde{C}_{n,m}\\
\tilde{\eta}^e(0)={ \bf \tilde{ d}}.
\end{cases}
\end{equation}
\end{definition}
\begin{definition} (Multiarcs)\\ \label{f}
Let us consider $\tilde{\FF}_{e}$ to be the unique lift of the submanifold $\FF_e$ with the property:
\begin{equation}
\begin{cases}
\tilde{\FF}_{e}: (0,1)^m\rightarrow \tilde{C}_{n,m}\\ 
\tilde{\eta}^e(1) \in \tilde{\FF}_{e}.
\end{cases}
\end{equation}
This submanifold gives a class in the Borel-Moore homology, denoted by:
$$[\tilde{\FF}_{e}] \in H^{\text{lf},-}_m(\tilde{C}_{n,m}, \Z).$$ 
This is called the multiarc corresponding to the partition $e \in E_{n,m}$.
 \end{definition}
\begin{proposition}(\cite{Martel})
The set of all multiarcs 
$\{  [\tilde{\FF}_e] \ | \  e\in E_{n,m} \}$ is a basis for $ H^{\text{lf},-}_m(\tilde{C}_{n,m}, \Z).$
\end{proposition}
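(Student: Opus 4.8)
The plan is to reduce the statement to a handle decomposition of the configuration space $C_{n,m}$ adapted to the fixed system of $n$ arcs joining the boundary point $w$ to the punctures $1,\dots,n$. The guiding idea is that each multiarc $\FF_e$ is exactly the top-dimensional open handle recording how the $m$ points distribute among the $n$ arcs, the partition $e=(e_1,\dots,e_n)$ prescribing that $e_i$ of the points sit on the $i^{\text{th}}$ arc. So I expect the multiarcs to be, after lifting, a \emph{cellular} basis rather than merely a spanning family, and the real work is to make this precise for the Borel--Moore homology of the cover relative to the fibre over $w$.

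First I would equip $D_n$ with a Morse function (or a CW/handle structure) in which the $n$ arcs from $w$ are the unstable directions, and lift it to a $Sym_m$-invariant function on the ordered configuration space, descending to $C_{n,m}$. The critical loci of the induced function on $C_{n,m}$ are precisely the configurations supported on the union of arcs, and these are indexed by $E_{n,m}$: a critical stratum is determined by how many of the $m$ points lie on each arc. A dimension count shows each such handle is $m$-dimensional, so that, relative to the chosen part of the boundary (the fibre over $w$), the only contribution to $H^{\text{lf},-}_m(\tilde{C}_{n,m},\Z)$ comes in degree $m$, the remaining strata being killed by the relative boundary condition or cancelling in lower degrees.

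Next I would pass to the covering $\tilde{C}_{n,m}$. Because the handle decomposition downstairs is $\phi$-compatible, each open handle lifts to a $\Z[x^{\pm1},d^{\pm1}]$-orbit of handles upstairs, so the relative Borel--Moore chain complex of the cover is free over the deck ring with one generator per element of $E_{n,m}$; the associated homology in degree $m$ is then free of rank $|E_{n,m}|$, and by construction the generator attached to a partition $e$ is the class $[\tilde{\FF}_e]$ of the corresponding lifted multiarc. To confirm that these classes are genuinely a basis and not just a spanning set, I would compute the intersection pairing of $[\tilde{\FF}_e]$ against the dual family of compact cycles (products of small loops encircling the arc-configurations, living in the complementary relative homology): the pairing matrix is triangular for the natural partial order on $E_{n,m}$ and carries monomials in $x,d$ on the diagonal, hence is invertible over $\Z[x^{\pm1},d^{\pm1}]$. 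Perfectness of this Poincar\'e--Lefschetz pairing then upgrades linear independence to the basis statement.

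The main obstacle is the concentration claim in the second step: controlling Borel--Moore homology in the noncompact configuration directions and verifying that the relative part of the boundary (the fibre over $w$) exactly trivialises all contributions outside degree $m$. This is where the careful choice of the Morse function, and of how the arcs meet the boundary, matters, and where one must be sure no lower-index handles survive to spoil freeness. The intersection-pairing computation, by contrast, is routine once the dual cycles are set up, since the local picture near each arc reduces to a one-variable model whose pairing is a single monomial in $x$ and $d$.
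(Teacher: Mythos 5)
The paper does not prove this proposition; it is imported verbatim from Martel's work, so there is no in-paper argument to compare against. Your sketch follows the route that the literature (Bigelow, Paoluzzi--Paris, Martel) actually takes: exhibit the $\FF_e$ as the top cells of a decomposition of $C_{n,m}$ adapted to the arcs from $w$, show the relative Borel--Moore homology of the cover is concentrated in degree $m$ and free over $\Z[x^{\pm1},d^{\pm1}]$ of rank $|E_{n,m}|$, and certify linear independence of the lifted classes by pairing them against a dual family of compact cycles with a triangular, monomial-diagonal intersection matrix. So the strategy is the right one.

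That said, two points in your plan are not yet proofs. First, the Morse-theoretic setup as you describe it does not work off the shelf: if you take a Morse function $f$ on $D_n$ whose unstable manifolds are the $n$ arcs and pass to $F(\{x_1,\dots,x_m\})=\sum_j f(x_j)$ on $C_{n,m}$, the result is not Morse --- several configuration points cannot sit at the same critical point of $f$, the critical locus of $F$ is not a discrete set of index-$m$ points indexed by $E_{n,m}$, and $F$ degenerates where points collide along an arc near a critical value. One needs either a Morse--Bott/stratified version or, as in the sources, a direct combinatorial cell model in which $\FF_e \cong \operatorname{Conf}_{e_1}\times\cdots\times\operatorname{Conf}_{e_n}$ appears as an open $m$-cell; your dimension count only makes sense after that replacement. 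Second, the concentration claim --- that relative to the fibre over $w$ everything outside degree $m$ dies --- is the entire technical content of the theorem, and you flag it as ``the main obstacle'' without indicating how it is established (in the literature it is done by induction on $n$ and $m$ via long exact sequences of the pair, not by inspecting strata). As it stands the proposal is a correct roadmap with the central step asserted rather than proved; the intersection-pairing step, by contrast, is routine once the dual classes are constructed, as you say.
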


\begin{notation}(Normalised multiarc)
For $e \in E_{n,m}$, let us consider a normalisation of the multiarc given by:
$$\F_{e}:=x^{\sum_{i=1}^{n}(i-1) e_i} [\tilde{\FF}_e]  \in H^{\text{lf},-}_m(\tilde{C}_{n,m}, \Z).$$
\end{notation}

\begin{notation}(Lawrence representation)\label{T1}\\
Let $l_{n,m}$ be the braid group action from above, in the basis $\mathscr{B}_{H^{\text{lf},-}_m(\tilde{C}_{n,m}, \Z)}:=\{ \F_{e}, e \in E_{n,m}\}$ given by multiarcs:
$$l_{n,m}: B_n\rightarrow Aut(H^{\text{lf},-}_m(\tilde{C}_{n,m}, \Z)).$$
\end{notation}
\subsection{Identification between weight space representations and homological representations}

We will use the following specialisation of coefficients:
\begin{equation}
\begin{cases}
\gamma: \Z[x^{\pm1},d^{\pm1}]\rightarrow \Z[q^{\pm1},s^{\pm1}]\\
\gamma(x)=s^2; \ \ \gamma(d)=q^{-2}.
\end{cases}
\end{equation}
The advantage of the basis from above is that it naturally corresponds to the basis in the weight spaces. More precisely, in \cite{Martel}, it was shown the following identification.
\begin{theorem}(\cite{Martel})
The quantum representations on weight spaces are isomorphic to the homological representations of the braid group:
$$B_n\curvearrowright \hat{V}_{n,m} \simeq H_{n,m}|{_{\gamma}}\curvearrowleft B_n$$ 
$$\left( \hat{\varphi}_{n,m}, \mathscr{B}_{\hat{V}_{n,m}} \right) \ \ \ \ \ \ \left( l_{n,m}|_{\gamma}, \mathscr{B}_{H^{\text{lf},-}_m(\tilde{C}_{n,m}, \Z)}  \right) $$
\begin{equation}
 \ \ \ \ \ \ \ \ \ \ \ \ \ \ \ \ \Theta_{n,m}(v_{e_1}\otimes ... \otimes v_{e_{n}})=\F_{e},  \text{ for }  e=(e_1,...,e_n) \in E_{n,m}.
\end{equation}
\end{theorem}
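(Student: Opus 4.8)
The plan is to reduce the asserted braid-equivariance to a check on generators and then to a purely local computation on two adjacent punctures. First I would record what is essentially formal: both $\hat{V}_{n,m}$ and $H^{\text{lf},-}_m(\tilde{C}_{n,m},\Z)|_{\gamma}$ are free $\Z[q^{\pm1},s^{\pm1}]$-modules, with the tensor basis $\{v_e : e\in E_{n,m}\}$ on the quantum side and the normalised multiarc basis $\{\F_e : e\in E_{n,m}\}$ on the homological side. Since both bases are indexed by the same set $E_{n,m}$, the assignment $v_e\mapsto\F_e$ extends to a $\Z[q^{\pm1},s^{\pm1}]$-module isomorphism $\Theta_{n,m}$, and the entire content of the theorem is the intertwining relation
\begin{equation}
\Theta_{n,m}\circ\hat{\varphi}_{n,m}(\beta)=\big(l_{n,m}|_{\gamma}\big)(\beta)\circ\Theta_{n,m},\qquad \beta\in B_n.
\end{equation}

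Because $B_n$ is generated by the elementary braids $\sigma_1,\dots,\sigma_{n-1}$, it suffices to verify this identity for each $\sigma_i$. The key structural observation is that both actions are \emph{local}: on the quantum side $\hat{\varphi}_{n,m}(\sigma_i)$ is $\Id^{\otimes(i-1)}\otimes\mathscr{R}\otimes\Id^{\otimes(n-i-1)}$, so it only mixes the $i$-th and $(i+1)$-th tensor factors, while on the homological side $\sigma_i$ is realised by a half-twist supported in a small disc enclosing only the punctures $i$ and $i+1$. I would therefore freeze all configuration points lying on the segments to punctures $j\neq i,i+1$, and reduce to understanding how a multiarc carrying $a$ points on the $i$-th segment and $b$ points on the $(i+1)$-th segment is transformed, under the half-twist, into a $\Z[x^{\pm1},d^{\pm1}]$-combination of multiarcs in which the $a+b$ points are redistributed between these two segments. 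This is exactly the two-puncture ($n=2$) model, up to tracking the monodromy accumulated by the frozen strands.

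The heart of the argument is then the explicit two-strand computation. On the quantum side one reads off the coefficients of $\mathscr{R}(v_a\otimes v_b)$ from the $R$-matrix of $U_q(sl(2))$ over $\Z[q^{\pm1},s^{\pm1}]$; these are the familiar products of $q$-integers together with the weight-dependent factors coming from the $K$-action. On the homological side one isotopes the half-twisted lift $\tilde{\FF}_e$ back to standard position, writes it as a sum of standard multiarcs, and carefully tracks the deck-transformation weights in $x$ and $d$ that each arc accumulates as it crosses the punctures and winds around the other arcs. Matching these two expansions after applying $\gamma(x)=s^2$, $\gamma(d)=q^{-2}$ is what establishes equivariance; the normalisation $\F_e=x^{\sum_{i=1}^n(i-1)e_i}[\tilde{\FF}_e]$ is precisely what cancels the ``global'' winding of the $i$-th segment past the punctures $1,\dots,i-1$, so that only the local crossing monodromy remains to be compared with the braiding.

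The main obstacle I expect is exactly this geometric bookkeeping of monodromy exponents: correctly computing the powers of $x$ and $d$ picked up when the half-twist drags the configuration points across punctures and across one another, and showing that the resulting structure constants agree term-by-term with the a priori quite different-looking $R$-matrix coefficients. A clean way to organise this would be to evaluate the multiarc classes against a dual family of noodle-type cycles through the Borel--Moore intersection pairing, turning each structure constant into an explicit local intersection count carrying a controlled deck-transformation weight, and then to check that the two tables of structure constants coincide. Establishing this coincidence on the $n=2$ local model, and then propagating it through the locality reduction of the previous paragraph, completes the verification of the intertwining relation and hence the isomorphism.
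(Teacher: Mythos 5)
This theorem is imported from \cite{Martel}; the paper you are reading gives no proof of it, so the only meaningful comparison is with Martel's argument, whose architecture your outline does reproduce faithfully: reduction to the generators $\sigma_i$, locality of both the $R$-matrix and the half-twist, reduction to a two-puncture model, and the observation that the normalisation $\F_e = x^{\sum_i (i-1)e_i}[\tilde{\FF}_e]$ is exactly what absorbs the global winding so that only local monodromy survives. Your remarks on what the specialisation $\gamma(x)=s^2$, $\gamma(d)=q^{-2}$ must accomplish are also correct.

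The gap is that the step you yourself identify as ``the heart of the argument'' is never carried out: you assert that the homological expansion of the half-twisted multiarc and the $R$-matrix expansion of $\mathscr{R}(v_a\otimes v_b)$ coincide after applying $\gamma$, but for this theorem that coincidence \emph{is} the entire content --- everything else is bookkeeping about free modules with bases indexed by the same set. Without exhibiting the two tables of structure constants (the $q$-binomial and $\prod_k(sq^{-k-i}-s^{-1}q^{k+i})$ factors on the quantum side versus the deck-transformation weights accumulated by the redistributed configuration points on the homological side) and checking them term by term, nothing has been proved. Two further points in your outline need justification rather than assertion: (i) the ``freezing'' reduction to $n=2$ requires knowing that $H^{\text{lf},-}_m(\tilde C_{n,m},\Z)$ decomposes compatibly with a subdisc containing only the punctures $i,i+1$ (this is a structural result about the homology, not a formality, especially since the homology is taken relative to part of the boundary fibre over $w$); and (ii) your proposed shortcut via a dual family of noodle-type cycles and the Borel--Moore intersection pairing only determines the structure constants if that pairing is known to be nondegenerate over $\Z[x^{\pm1},d^{\pm1}]$, which is itself a nontrivial theorem that would have to be invoked or proved. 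As it stands the proposal is a correct plan of attack, not a proof.
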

\begin{corollary}\label{C:1}
This isomorphism shows that the corresponding specialisations at roots of unity are isomorphic:
\begin{equation}
\hat{\varphi}_{n,m}|_{\eta_{\xi_N,\lambda}} \simeq l_{n,m}|_{\psi_{\xi_N,\lambda}}
\end{equation}
\end{corollary}
\section{Level N Lawrence representation}\label{S:4}
On the quantum side, we have seen that the $N^{th}$ ADO invariant is related to weight spaces corresponding to the $N^{th}$ finite part of the generic Verma module. Having this in mind, we consider a subspace in the Lawrence representation, which will correspond to the level $N$ weight spaces from the quantum side. Let us make this precise. 
\begin{definition}(Level $N$ Lawrence representation)\label{D:3}\\  
Let us consider the subspace in the Lawrence representation generated by the  multiarcs whose multiplicities are all bounded by $N$:
\begin{equation}
{}^N \! H_{n,m}:=<[\FF_{e}]\mid e \in E^N_{n,m}>_{\Z[x^{\pm 1}, d^{\pm 1}]} \subseteq H^{\text{lf},-}_m(\tilde{C}_{n,m}, \Z). 
\end{equation}
\end{definition}
In the sequel, we show that this homological subspace, up to level $N$, corresponds to the weight spaces from tensor powers of the finite dimensional module of dimension $N$ from the Verma module, after an appropriate specialisation. 
\begin{lemma}
The braid group action on $H^{\text{lf},-}_m(\tilde{C}_{n,m}, \Z)$ specialised through $\psi_{\xi_N,\lambda}$ preserves the specialised vector subspace ${}^N \! H_{n,m}|_{\psi_{\xi_N,\lambda}}$:
\begin{center}
\begin{tikzpicture}
[x=1.2mm,y=1.4mm]

\node (b1)  [color=blue]             at (0,10)    {${}^N \! H_{n,m}|_{\psi_{\xi_N,\lambda}}$};
\node (t1) [color=black] at (34,10)   {$H^{\text{lf},-}_m(\tilde{C}_{n,m}, \Z)|_{\psi_{\xi_N,\lambda}}$};
\node (b2) [color=black] at (30,0)  {$l_{n,m}|_{\psi_{\xi_N,\lambda}}$};
\node (t2)  [color=blue]             at (0,0)    {$l_{n,m}|_{\psi_{\xi_N,\lambda}}$};
\node (d2) [color=black] at (15,10)   {$\hookrightarrow$};
\node (d2) [color=black] at (15,12)   {$\iota$};
\node (d2) [color=black] at (0,5)   {$\circlearrowleft$};
\node (d2) [color=black] at (30,5)   {$\circlearrowleft$};
\node (d2) [color=black] at (15,5)   {$\equiv$};
\end{tikzpicture}
\end{center} 

\end{lemma}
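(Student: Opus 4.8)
The plan is to deduce this invariance purely formally from the corresponding statement on the quantum side, Proposition \ref{P:1}, by transporting it through Martel's identification $\Theta_{n,m}$ (Corollary \ref{C:1}). The first thing I would verify is that the two specialisations fit together, namely that $\psi_{\xi_N,\lambda}=\eta_{\xi_N,\lambda}\circ\gamma$: indeed $\gamma(x)=s^2\mapsto \xi_N^{2\lambda}$ and $\gamma(d)=q^{-2}\mapsto \xi_N^{-2}$ under $\eta_{\xi_N,\lambda}$, which agrees with Notation \ref{D:1}. Consequently, specialising the target module $H_{n,m}|_{\gamma}$ of $\Theta_{n,m}$ a second time through $\eta_{\xi_N,\lambda}$ produces exactly $H^{\text{lf},-}_m(\tilde C_{n,m},\Z)|_{\psi_{\xi_N,\lambda}}$, and the specialised $\Theta_{n,m}$ remains a basis-preserving isomorphism intertwining $\hat\varphi_{n,m}|_{\eta_{\xi_N,\lambda}}$ with $l_{n,m}|_{\psi_{\xi_N,\lambda}}$.

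Next I would match the two subspaces across $\Theta_{n,m}$. Since $x$ is invertible in $\Z[x^{\pm1},d^{\pm1}]$, the normalisation $\F_e=x^{\sum_{i=1}^{n}(i-1)e_i}[\tilde\FF_e]$ does not change spans, so the module ${}^N\!H_{n,m}$ of Definition \ref{D:3} equals $\langle \F_e \mid e\in E^N_{n,m}\rangle$. Because $\Theta_{n,m}$ sends the standard basis vector $v_e=v_{e_1}\otimes\cdots\otimes v_{e_n}$ to $\F_e$ for every $e\in E_{n,m}$, and because $V^N_{n,m}$ has basis $\{v_e\mid e\in E^N_{n,m}\}$, the isomorphism restricts to a bijection of sub-bases and hence carries $V^N_{n,m}$ onto ${}^N\!H_{n,m}$. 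Specialising preserves this, so $\Theta_{n,m}$ identifies $V^N_{n,m}|_{\eta_{\xi_N,\lambda}}$ with ${}^N\!H_{n,m}|_{\psi_{\xi_N,\lambda}}$ inside their respective ambient spaces.

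Finally I would conclude by conjugation. Proposition \ref{P:1} asserts that $\hat\varphi_{n,m}|_{\eta_{\xi_N,\lambda}}$ preserves $V^N_{n,m}|_{\eta_{\xi_N,\lambda}}$. Conjugating this invariance by the intertwiner $\Theta_{n,m}$ (via Corollary \ref{C:1}) and applying the subspace identification of the previous paragraph immediately yields that $l_{n,m}|_{\psi_{\xi_N,\lambda}}$ preserves ${}^N\!H_{n,m}|_{\psi_{\xi_N,\lambda}}$, which is precisely the commuting diagram claimed.

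I expect the genuine content of the statement to sit not in this lemma but in Proposition \ref{P:1}, whose proof in \cite{Cr3} relies on the precise root-of-unity specialisation of the $R$-matrix, where the coefficients that would push a tensor multiplicity above $N-1$ vanish at $q=\xi_N$. Granting that result, the only real care needed here is bookkeeping: checking the compatibility $\psi_{\xi_N,\lambda}=\eta_{\xi_N,\lambda}\circ\gamma$ and confirming that the two-step specialisation of $\Theta_{n,m}$ remains an isomorphism (first of $\Z[q^{\pm1},s^{\pm1}]$-modules, then of $\C$-modules) so that the sub-bases continue to correspond. That compatibility check is the single step where one could slip.
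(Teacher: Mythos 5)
Your proposal is correct and follows essentially the same route as the paper: both transport the invariance statement of Proposition \ref{P:1} through Martel's identification $\Theta_{n,m}$ (Corollary \ref{C:1}), matching the sub-basis $\{v_e \mid e\in E^N_{n,m}\}$ of $V^N_{n,m}$ with the multiarcs indexed by $E^N_{n,m}$ and then conjugating. Your explicit verification that $\psi_{\xi_N,\lambda}=\eta_{\xi_N,\lambda}\circ\gamma$ and your remark that the normalisation by the invertible monomial in $x$ does not change spans are details the paper leaves implicit, but they do not change the argument.
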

\begin{proof}
Going back to the algebraic side, we know that the braid group action  specialised at roots of unity using $\eta_{\xi_N,\lambda}$, preserves the small weight spaces inside the ones corresponding to the Verma module, as presented in Proposition \ref{P:1}:
\begin{center}
\begin{equation}\label{eq:3}
\begin{tikzpicture}
[x=1.2mm,y=1.4mm]

\node (b1)  [color=blue]             at (0,10)    {$V^N_{n,m}|_{\eta_{\xi_N,\lambda}}$};
\node (t1) [color=black] at (30,10)   {$\hat{V}_{n,m}|_{\eta_{\xi_N,\lambda}}$};
\node (b2) [color=black] at (30,0)  {$\hat{\varphi}_{n,m}|_{\eta_{\xi_N,\lambda}}$};
\node (t2)  [color=blue]             at (0,0)    {$\varphi^N_{n,m}|_{\eta_{\xi_N,\lambda}}$};
\node (d2) [color=black] at (15,10)   {$\hookrightarrow$};
\node (d2) [color=black] at (15,12)   {$\iota$};
\node (d2) [color=black] at (0,5)   {$\circlearrowleft$};
\node (d2) [color=black] at (30,5)   {$\circlearrowleft$};
\node (d2) [color=black] at (15,5)   {$\equiv$};
\end{tikzpicture}
\end{equation}
\end{center} 
Here, we remind that $\varphi^N_{n,m}|_{\eta_{\xi_N,\lambda}}$ is $\hat{\varphi}_{n,m}|_{\eta_{\xi_N,\lambda}}$ restricted to the specialised weight space $V^N_{n,m}|_{\eta_{\xi_N,\lambda}}$. 

Now, using the identification from Corollary \ref{C:1} we notice that the  generators given by monomials from $V^N_{n,m}|_{\eta_{\xi_N,\lambda}}$ corresponds exactly to the normalised multiarcs which are prescribed by partitions with all components smaller than $N$.  

This shows that $V^N_{n,m}|_{\eta_{\xi_N,\lambda}}$ corresponds to the homological module ${}^N \! H_{n,m}|_{\psi_{\xi_N,\lambda}}$ which is specialised through ${\psi_{\xi_N,\lambda}}$:
\begin{center}
\begin{equation}\label{eq:4}
\begin{tikzpicture}
[x=1.2mm,y=1.4mm]
\node (b1)  [color=blue]             at (0,10)    {$\hat{V}_{n,m}|_{\eta_{\xi_N,\lambda}}$};
\node (t1) [color=black] at (30,10)   {$H^{\text{lf},-}_m(\tilde{C}_{n,m}, \Z)|_{\psi_{\xi_N,\lambda}}$};
\node (b2) [color=black] at (30,0)  {${}^N \! H_{n,m}|_{\psi_{\xi_N,\lambda}}$};
\node (t2)  [color=blue]             at (0,0)    {$V^N_{n,m}|_{\eta_{\xi_N,\lambda}}$};
\node (d2) [color=black] at (13,10)   {$\simeq$};
\node (d2) [color=black] at (15,16)   {$\Theta_{n,m} \ \mid _{\eta_{\xi_N},\lambda}$};
\node (d2) [color=black] at (0,5)   {$\cup$};
\node (d2) [color=black] at (30,5)   {$\cup$};
\node (d2) [color=black] at (13,0)   {$\leftrightarrow$};
\end{tikzpicture}
\end{equation}
\end{center} 
Using the commutativity property from equation \eqref{eq:3} and the correspondence presented in relation \eqref{eq:4}, we conclude the commutativity property for the braid group actions on the homological side.
\end{proof}
\begin{proposition}(Level $N$ Lawrence representation) \label{D:2}\\ 
It follows that $l_{n,m}|_{\psi_{\xi_N,\lambda}}$ induces a well defined action on the specialised subspace ${}^N \!  H_{n,m}|_{\psi_{\xi_N,\lambda}}$, which we denote by:
$${}^N \! l_{n,m}: B_n\rightarrow Aut\left({}^N \!  H_{n,m}|_{\psi_{\xi_N,\lambda}}\right).$$
\end{proposition}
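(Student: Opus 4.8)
The plan is to deduce this statement directly from the preceding Lemma, which is the substantive step; the Proposition itself is the formal packaging of an invariant subspace into a restricted representation. The Lemma guarantees that for every braid $\beta_n \in B_n$ the specialised operator $l_{n,m}|_{\psi_{\xi_N,\lambda}}(\beta_n)$ carries the subspace ${}^N \! H_{n,m}|_{\psi_{\xi_N,\lambda}}$ into itself. Hence the restriction of this operator to the subspace is a well-defined endomorphism, and the map ${}^N \! l_{n,m}$ is simply this restriction applied to each braid.

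First I would record that the assignment sending $\beta_n$ to the restriction of $l_{n,m}|_{\psi_{\xi_N,\lambda}}(\beta_n)$ to ${}^N \! H_{n,m}|_{\psi_{\xi_N,\lambda}}$ is a group homomorphism. This holds because restriction to an invariant subspace commutes with composition of operators: the relation $l_{n,m}|_{\psi_{\xi_N,\lambda}}(\beta_n \beta_n') = l_{n,m}|_{\psi_{\xi_N,\lambda}}(\beta_n) \circ l_{n,m}|_{\psi_{\xi_N,\lambda}}(\beta_n')$ in the ambient homology descends verbatim to the subspace once we know the subspace is preserved. Next I would verify that each restricted endomorphism is in fact invertible, so that the target is genuinely $\Aut$ and not merely $\End$. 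The key point here is that the Lemma applies to \emph{every} element of $B_n$, in particular to $\beta_n^{-1}$ as well as to $\beta_n$. Thus both $l_{n,m}|_{\psi_{\xi_N,\lambda}}(\beta_n)$ and $l_{n,m}|_{\psi_{\xi_N,\lambda}}(\beta_n^{-1})$ preserve the subspace, and since their composite in the full homology is the identity, their restrictions compose to the identity of ${}^N \! H_{n,m}|_{\psi_{\xi_N,\lambda}}$. Consequently each restriction is an automorphism, and we set ${}^N \! l_{n,m}(\beta_n)$ equal to this restriction.

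No genuine obstacle remains at this stage: the essential content, namely the invariance of the subspace under the specialised action, was already secured in the Lemma via the identification $\Theta_{n,m}$ with the quantum weight spaces (Corollary \ref{C:1}) together with the algebraic invariance of Proposition \ref{P:1}. The only subtlety deserving a moment's care is the promotion from $\End$ to $\Aut$, since in general the restriction of an invertible operator to an invariant subspace need not be invertible; as noted above, this is resolved precisely by applying the invariance statement to the inverse braid rather than to a single generator.
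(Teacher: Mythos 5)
Your proposal is correct and takes essentially the same route as the paper, which states this Proposition as an immediate consequence of the preceding Lemma without further argument. Your explicit verification that the restriction is a group homomorphism and that invertibility follows by applying the invariance to $\beta_n^{-1}$ simply fills in details the paper leaves implicit.
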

\begin{corollary}\label{C:2}
We have the following identifications of braid group actions:
\begin{equation}
\varphi^N_{n,m}|_{\eta_{\xi_N,\lambda}}\curvearrowright  V^N_{n,m}|_{\eta_{\xi_N,\lambda}} \simeq {}^N \!  H_{n,m}|_{\psi_{\xi_N,\lambda}} \curvearrowleft {}^N \! l_{n,m}.
\end{equation}
\end{corollary}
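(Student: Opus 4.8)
The plan is to obtain Corollary \ref{C:2} as a direct \emph{restriction} of the ambient isomorphism already furnished by Corollary \ref{C:1}, once one checks that this isomorphism respects the two level-$N$ subspaces. First I would recall that Martel's identification $\Theta_{n,m}$ matches the two bases by the same indexing set, sending $v_{e_1}\otimes\dots\otimes v_{e_n}$ to $\F_{e}$ for every $e\in E_{n,m}$, and that after specialising it becomes an isomorphism of braid group representations
$$\hat{\varphi}_{n,m}|_{\eta_{\xi_N,\lambda}}\ \simeq\ l_{n,m}|_{\psi_{\xi_N,\lambda}},$$
as recorded in Corollary \ref{C:1}. The one compatibility worth making explicit is that the two specialisations agree through $\gamma$, namely $\eta_{\xi_N,\lambda}\circ\gamma=\psi_{\xi_N,\lambda}$ as ring maps $\Z[x^{\pm1},d^{\pm1}]\to\C$ (both send $x\mapsto\xi_N^{2\lambda}$ and $d\mapsto\xi_N^{-2}$); this is exactly what guarantees that the generic basis correspondence survives the passage to roots of unity.

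Next I would observe that the matching $v_e\leftrightarrow\F_e$ is index-preserving, so the sub-basis of $V^N_{n,m}$ indexed by $E^N_{n,m}$ (the monomials all of whose exponents are strictly less than $N$) is carried exactly onto the sub-basis $\{\F_e\mid e\in E^N_{n,m}\}$ which defines ${}^N\!H_{n,m}$ in Definition \ref{D:3}. Hence the specialised isomorphism $\Theta_{n,m}|_{\eta_{\xi_N,\lambda}}$ restricts to a linear isomorphism
$$V^N_{n,m}|_{\eta_{\xi_N,\lambda}}\ \xrightarrow{\ \simeq\ }\ {}^N\!H_{n,m}|_{\psi_{\xi_N,\lambda}},$$
which is precisely the vertical correspondence displayed in diagram \eqref{eq:4} in the proof of the preceding Lemma.

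To upgrade this linear isomorphism to an intertwiner, I would use that both level-$N$ actions are defined as honest restrictions of the ambient ones: $\varphi^N_{n,m}|_{\eta_{\xi_N,\lambda}}$ is $\hat{\varphi}_{n,m}|_{\eta_{\xi_N,\lambda}}$ restricted to $V^N_{n,m}|_{\eta_{\xi_N,\lambda}}$ (Proposition \ref{P:1}), while ${}^N\!l_{n,m}$ is $l_{n,m}|_{\psi_{\xi_N,\lambda}}$ restricted to ${}^N\!H_{n,m}|_{\psi_{\xi_N,\lambda}}$ (the preceding Lemma together with Proposition \ref{D:2}). Since $\Theta_{n,m}|_{\eta_{\xi_N,\lambda}}$ intertwines the two ambient actions and carries one invariant subspace onto the other, its restriction automatically intertwines the restricted actions, which yields the claimed equivalence.

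The main point is therefore not any new computation but the bookkeeping of the two specialisations: the whole argument rests on the identity $\eta_{\xi_N,\lambda}\circ\gamma=\psi_{\xi_N,\lambda}$, which aligns the generic identification $\Theta_{n,m}$ with the correct root-of-unity specialisation on each side. Once that is in place the corollary is a formal consequence of Corollary \ref{C:1}, Proposition \ref{P:1}, and the preceding Lemma, so I expect no genuine obstacle beyond verifying this compatibility of coefficient maps.
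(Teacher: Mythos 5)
Your proposal is correct and follows essentially the same route as the paper: the corollary is obtained by restricting the specialised Martel isomorphism of Corollary \ref{C:1} to the level-$N$ sub-bases indexed by $E^N_{n,m}$, exactly as in diagram \eqref{eq:4} of the preceding Lemma. Your explicit check that $\eta_{\xi_N,\lambda}\circ\gamma=\psi_{\xi_N,\lambda}$ is a useful detail the paper leaves implicit, but it does not change the argument.
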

\subsection{Homological Partial Trace}
In this part, we introduce the concept of {\em partial trace on Lawrence representations}. This definition comes from the aim of having a correspondent of the partial trace from the quantum side, in homological terms. In the end, we will see that the collection of all homological partial traces will correspond to the partial trace on the quantum side (on the tensor power of the representation $U_{\lambda}$).

\begin{notation}
Let ${}^N \! H^{0}_{n,m}$ be the subspace in the level $N$ Lawrence representation which is generated by those multiarcs which are prescribed by partitions whose first component is zero:
 $${}^N \! H^{0}_{n,m}:=<[\FF_{e}]\mid e \in E^N_{n,m}, e_1=0>_{\Z[x^{\pm 1}, d^{\pm 1}]} \subseteq {}^N \! H_{n,m}. 
$$
Let $\iota_0: {}^N \! H^{0}_{n,m}|_{\psi_{\xi_N,\lambda}}\rightarrow {}^N \! H_{n,m} |_{\psi_{\xi_N,\lambda}}$ be the corresponding inclusion.
Further on, we consider the projection onto this subspace, as follows:
$$\pi_0: {}^N \! H_{n,m}|_{\psi_{\xi_N,\lambda}}\rightarrow {}^N \! H^{0}_{n,m}|_{\psi_{\xi_N,\lambda}}$$
$$\pi_0(\FF_e)=\begin{cases}[\FF_e], \text{ if } e_1=0\\
0, \text{ otherwise}.
\end{cases}
$$
\end{notation}
 Now, we introduce the definition of a homological partial trace (with respect to elements from the basis of ${}^N \! H_{n,m}$ prescribed by partitions starting with zero) on the level $N$ Lawrence representation.  
\begin{definition}(Homological partial trace)\\ \label{D:4}  
Let $n,m\in \N$. The weight zero partial trace (corresponding to the last $m-1$ components) is given by:
\begin{equation}
\begin{aligned}
& hptr_0: Aut\left( {}^N \!  H_{n,m}|_{\psi_{\xi_N,\lambda}} \right) \rightarrow \C\\
& hptr_0(f)=tr\left(\pi_0\circ f \circ \iota_0 \right).
\end{aligned}
\end{equation}

\end{definition}
\section{Homological model for the ADO invariant}\label{S:5}
In this section, we show the homological model presented in Theorem \ref{THEOREM}.
We start with the definition of the ADO polynomial presented in formula \eqref{eq:A}:
\begin{equation}
\begin{aligned}
\Phi_{N}(L,\lambda)&={\xi_N}^{(N-1)\lambda w(\beta_n)}  <ptr_{U_{\lambda}}\left( (Id \otimes K^{1-N})\circ \varphi^{\xi_N,\lambda}_{n}(\beta_n)\right)>.
\end{aligned}
\end{equation}
In the following, we investigate the partial trace from this formula. We remind that the corresponding endomorphism is a scalar times the identity:
\begin{equation}\label{eq:5} 
ptr_{U_{\lambda}}\left( (Id \otimes K^{1-N})\circ \varphi^{\xi_N,\lambda}_{n}(\beta_n)\right) \in \C \cdot Id_{U_{\lambda}} \subseteq End(U_{\lambda}).
\end{equation}

Following this remark, we have that:
\begin{equation}
\begin{aligned}
ptr_{U_{\lambda}}\left( (Id \otimes K^{1-N})\circ \varphi^{\xi_N,\lambda}_{n}(\beta_n)\right)(v_i)& = \\ 
 =~<ptr_{U_{\lambda}}\left( (Id \otimes K^{1-N}) \circ \varphi^{\xi_N,\lambda}_{n}(\beta_n)\right)> v_i, & \ \forall i \in \{0,...,N-1\}.
\end{aligned}
\end{equation}
Now, going back to the formula for the $N^{th}$ ADO invariant, we will compute the partial trace on the base vector $v_0$. In order to do this, we  denote by $pr_0$ the projection onto the subspace generated by the vector $v_0$:
$$pr_0:U_{\lambda}\rightarrow \C$$
\begin{equation}
pr_0(v_i)=
\begin{cases} 1, \text{ if } i=0\\
0, \text{otherwise}.
\end{cases}
\end{equation}

Using this, we have the formula:
\begin{equation}
\begin{aligned}
\Phi_{N}(L,\lambda)& ={\xi_N}^{(N-1)\lambda w(\beta_n)} \ pr_0 \circ ptr_{U_{\lambda}}\left( (Id \otimes K^{1-N}) \circ \varphi^{\xi_N,\lambda}_{n}(\beta_n)\right)(v_0).
\end{aligned}
\end{equation}

In the next part, for a fixed basis $B$ and a vector $v$, we denote by $C(v,w)$ the coefficient of $v$ in the expression obtained for the vector $w$ written in the basis $B$. We apply this notation for the basis given by the standard tensor monomials in the tensor product of $U_{\lambda}$. Using this notation, the partial trace with respect to $v_0$ can be written as follows:
\begin{equation}
 \Phi_{N}(L,\lambda)={\xi_N}^{(N-1)\lambda w(\beta_n)} \ \cdot
\end{equation}
\begin{equation*} 
\cdot \sum_{i_2,...,i_{n-1}=0}^{N-1} \hspace{-5mm} C\left( v_{0}\otimes v_{i_2} \otimes ... \otimes v_{i_{n}},  \left( Id \otimes K^{1-N}\right) \circ \varphi^{\xi_N,\lambda}_{n}(\beta_n)\left(v_{0}\otimes v_{i_2} \otimes ... \otimes v_{i_{n}}\right) \right).
\end{equation*}

Having in mind the notion of weight spaces, we split the above sum corresponding to the total weight $ m \in \{0,..,N-1\}$ of the vectors, as below:
\begin{equation}\label{eq:6}
 \Phi_{N}(L,\lambda)={\xi_N}^{(N-1)\lambda w(\beta_n)} \ \cdot \mathlarger{\sum}_{m=0}^{(N-1)(n-1)} \\ 
\end{equation}
\begin{equation*}
 \left( \hspace{-2mm} \sum_{\substack {(i_2,...,i_{n-1}) \\   \ \ \ \ \ \ \in E^N_{n-1,m}}} C\left( v_{0}\otimes v_{i_2} \otimes ... \otimes v_{i_{n}},  \left( Id \otimes K^{1-N}\right) \circ \varphi^{\xi_N,\lambda}_{n}(\beta_n)(v_{0}\otimes v_{i_2} \otimes ... \otimes v_{i_{n}} \right) \hspace{-1mm} \right).
\end{equation*}

Now, we remind that the braid group action on the quantum side behaves well with respect to weight spaces, as discussed in Corollary \ref{C:3}:
\begin{equation}
\varphi^{\xi_N,\lambda}_{n}= \bigoplus _{m=0}^{n(N-1)} \varphi^N_{n,m}|_{\eta_{\xi_N,\lambda}}.
 \end{equation}
 
This splitting through actions on weight spaces together with equation \eqref{eq:6}, lead to the following description:
\begin{equation}
 \Phi_{N}(L,\lambda)={\xi_N}^{(N-1)\lambda w(\beta_n)} \  \cdot \mathlarger{\sum}_{m=0}^{(N-1)(n-1)} 
\end{equation} 
\begin{equation*}
 \left(\hspace{-3mm} \sum_{{\substack {(i_2,...,i_{n-1}) \\   \ \ \ \ \ \ \in E^N_{n-1,m}}}} \hspace{-4mm} C\left( v_{0}\otimes v_{i_2} \otimes ... \otimes v_{i_{n}},  \left( Id \otimes K^{1-N}\right) \circ \varphi^N_{n,m}|_{\eta_{\xi_N,\lambda}}(\beta_n)(v_{0}\otimes v_{i_2} \otimes ... \otimes v_{i_{n}} \right)\hspace{-1mm} \right).
\end{equation*}

Now, we look what happens for a fixed $m$ in this formula. We remark that the vectors on which we act with the braid action have the following form:
$$v_0\otimes v_{i_2} \otimes ... \otimes v_{i_{n}}, \text{ where } v_{i_2} \otimes ... \otimes v_{i_{n}}\in V^N_{n-1,m}. \ \ \ $$ Further on, the action of $K$ on this weight space is given just by a scalar:
$$K\curvearrowright V^N_{n-1,m}|_{\eta_{\xi_N,\lambda}}=\xi_{N}^{(n-1)\lambda-2m} \cdot Id.$$
Gluing back the weight zero vector, we obtain:
\begin{equation}
\left( Id \otimes K^{1-N}\right) \curvearrowright  v_0 \otimes V^N_{n-1,m}|_{\eta_{\xi_N,\lambda}}= \xi_{N}^{(n-1)(1-N)\lambda-2m(1-N)} \cdot Id
\end{equation}

We arrive at the following weighted sum:
\begin{equation}
 \Phi_{N}(L,\lambda)={\xi_N}^{(N-1)\lambda w(\beta_n)} \  \cdot \mathlarger{\sum}_{m=0}^{(N-1)(n-1)} \sum_{{\substack {(i_2,...,i_{n-1}) \\   \ \ \ \ \ \ \in E^N_{n-1,m}}}}
\end{equation} 
\begin{equation*}
 \left(\xi_{N}^{(n-1)(1-N)\lambda-2m(1-N)} C\left( v_{0}\otimes v_{i_2} \otimes ... \otimes v_{i_{n}},  \varphi^N_{n,m}|_{\eta_{\xi_N,\lambda}}(\beta_n)(v_{0}\otimes v_{i_2} \otimes ... \otimes v_{i_{n}} \right)\hspace{-1mm} \right).
\end{equation*}

After we separate the coefficients, we have:
 \begin{equation}
 \Phi_{N}(L,\lambda)={\xi_N}^{(N-1)\lambda w(\beta_n)} \ \xi_{N}^{(n-1)(1-N)\lambda}  \cdot \mathlarger{\sum}_{m=0}^{(N-1)(n-1)} \xi_{N}^{-2m(1-N)}  
\end{equation} 
\begin{equation*}
 \sum_{{\substack {(i_2,...,i_{n-1}) \\   \ \ \ \ \ \ \in E^N_{n-1,m}}}}  C\left( v_{0}\otimes v_{i_2} \otimes ... \otimes v_{i_{n}},  \varphi^N_{n,m}|_{\eta_{\xi_N,\lambda}}(\beta_n)(v_{0}\otimes v_{i_2} \otimes ... \otimes v_{i_{n}} \right).
\end{equation*}

Using the identification between specialised homological representations on the $N^{th}$ homological part and  specialised representations on the $N^{th}$ weight spaces from Corollary \ref{C:2}, we conclude the formula:
\begin{equation}\label{eq:6'}
 \Phi_{N}(L,\lambda)={\xi_N}^{(N-1)\lambda w(\beta_n)} \ \xi_{N}^{(n-1)(1-N)\lambda}  \mathlarger{\sum}_{m=0}^{(N-1)(n-1)} \xi_{N}^{-2m(1-N)}  
\end{equation} 
\begin{equation*}
\cdot  \sum_{{\substack {(i_2,...,i_{n-1}) \\   \ \ \ \ \ \ \in E^N_{n-1,m}}}}  C\left( \F_{0,i_1,...,i_{n}},  {}^{N} \! l_{n,m}(\beta_n) \F_{0,i_1,...,i_{n}} \right).
\end{equation*}

Now, looking at the second sum, we remark that it leads exactly to the homological partial trace introduced in the definition \ref{D:4}:
\begin{equation}\label{eq:7}
 \sum_{{\substack {(i_2,...,i_{n-1}) \\   \ \ \ \ \ \ \in E^N_{n-1,m}}}}  C\left( \F_{0,i_1,...,i_{n}},  {}^{N} \! l_{n,m}(\beta_n) \F_{0,i_1,...,i_{n}} \right) =hptr_0\left( {}^{N} \! l_{n,m} (\beta_n) \right)
\end{equation}
The last two equations \eqref{eq:6'} and \eqref{eq:7} lead exactly to the formula from Theorem \ref{THEOREM} and conclude the proof of the homological model.

\bibliography{biblio}{}
\bibliographystyle{plain}

\
\
\url{https://www.maths.ox.ac.uk/people/cristina.palmer-anghel}  

\end{document}